\theoremstyle{plain} \numberwithin{equation}{section}
\newtheorem{theorem}{Theorem}[section]
\newtheorem{corollary}[theorem]{Corollary}
\newtheorem{lemma}[theorem]{Lemma}
\theoremstyle{definition}
\newtheorem{defn}[theorem]{Definition}
\newtheorem{remark}[theorem]{Remark}
\newtheorem{ex}[theorem]{Example}
\newtheorem*{theorem*}{Theorem}
\title{A Broken Circuit Model for Chromatic Homology Theories}
\author[1]{Alex Chandler}
\author[2]{Radmila Sazdanovi\'{c}}
\affil[1]{Faculty of Mathematics, University of Vienna}
\affil[2]{Department of Mathematics, North Carolina State University}
\newcommand{\Tot}{\textnormal{Tot}}
\newcommand{\rk}{\textnormal{rk}}
\newcommand{\qrk}{q\textnormal{rk}}
\newcommand{\Ob}{\textnormal{Ob}}
\newcommand{\CSNgmod}{\C[S_n]\textbf{-gmod}}
\newcommand{\CSMgmod}{\C[S_m]\textbf{-gmod}}
\newcommand{\Ind}{\textnormal{Ind}}
\newcommand{\BCC}{\textnormal{BC}}
\newcommand{\mc}[1]{\mathcal{#1}}
\newcommand{\Z}{\mathbb{Z}}
\newcommand{\N}{\mathbb{N}}
\newcommand{\C}{\mathbb{C}}
\newcommand{\MBC}{\textnormal{M}_{\textnormal{BC}}} 
\newcommand{\Ch}{\textnormal{Ch}} 
\newcommand{\ch}{\textnormal{ch}} 
\newcommand{\FCH}{F^{\textnormal{Ch}}} 
\newcommand{\HCH}{H^{\textnormal{Ch}}} 
\newcommand{\FCS}{F^{\textnormal{CS}}} 
\newcommand{\HCS}{H^{\textnormal{CS}}}
\newcommand{\NBC}{\textnormal{NBC}}
\newcommand{\Rmod}{R\textnormal{-}\mathbf{mod}}
\newcommand{\Rgmod}{R\textnormal{-}\mathbf{gmod}}
\definecolor{forest}{rgb}{0.03, 0.47, 0.19}
\newsavebox\si
\newsavebox\sj
\newsavebox\tj
\newsavebox\graph
\newsavebox\graphonetwo
\newsavebox\graphonetwofour
\newsavebox\graphonetwothreefour
\newsavebox\graphonetwothreefive
\newsavebox\graphonetwofourfive
\newsavebox\graphonetwothreefourfive
\newsavebox\graphonetwothree
\newsavebox\graphonetwofive
\newsavebox\graphthreefour
\newsavebox\graphtwothreefour
\newsavebox\graphonethreefour
\newsavebox\graphthreefourfive
\newsavebox\graphtwothreefourfive
\newsavebox\graphonethreefourfive
\newsavebox\graphonethreefive
\newsavebox\linearextension
\newsavebox\bcexample
\newtheorem{case}{Case}
\begin{document}

\maketitle

\begin{abstract}
Using the tools of algebraic Morse theory, and the thin poset approach to constructing homology theories, we give a categorification of Whitney's broken circuit theorem for the chromatic polynomial, and for Stanley's chromatic symmetric function.
%This approach provides bounds on the homological span 
% As an application, we are able to obtain bounds on the homological span 
%of chromatic homology, chromatic symmetric homology, and cohomology rings of graph configuration spaces.
\end{abstract}

%\tableofcontents

\section{Introduction}

% In graph theory, as in any field of mathematics, one is interested in classification. Classification problems are often made more tractable by the use of \textit{invariants}. Thus graph theorists are often interested in defining and studying \textit{graph invariants}, that is, functions $\chi:\{\textnormal{graphs}\}\to S$ where $S$ is a set, such that if $G$ and $G^\prime$ are isomorphic graphs, then $\chi(G)=\chi(G^\prime)$ in $S$. Such functions provide a tool for distinguishing graphs: if $\chi(G)\neq \chi(G^\prime)$ then $G$ and $G^\prime$ are not isomorphic. 

In his groundbreaking work at the beginning of the century, Khovanov \cite{khovanov2000categ} categorified the Jones polynomial, a powerful link invariant, lifting it to a bigraded homology theory.  Categorification has since found applications in many areas of mathematics, including graph theory \cite{everitt2009homology, turnereverittcellular, helme2005categorification, hepworth2015categorifying, jasso2006tutte, loebl2008, sazdanovic2018categorification, stovsic2008categorification}. 
The study of graph invariants largely began with the definition of the chromatic polynomial by Birkhoff \cite{birkhoff1912determinant} in 1912, which led to generalizations such as the Tutte polynomial \cite{tutte54}, and the chromatic symmetric function \cite{stanley1995symmetric}. Along with the Tutte polynomial, came a connection between graphs and links in the 3-sphere. In particular, the Tutte polynomial of a graph specializes to the Jones polynomial of an associated alternating link.

% bigraded abelian group which is obtained as the cohomology of a bigraded chain complex. One recovers the Jones polynomial from this bigraded abelian group by taking its graded Euler characteristic.
% The huge success of this theory encouraged others upgrade their favorite polynomial invariants to homological invariants via the process of categorification. 
% In this paper, we consider two such homological graph invariants: 
This work deals with two categorifications of the chromatic polynomial: a Khovanov-type chromatic homology defined by 
Helme-Guizon and Rong \cite{helme2005categorification}, and  Eastwood and Hugget's chromatic homology \cite{eastwood2007euler}, as well as the chromatic symmetric homology by Sazdanovi\'{c} and Yip \cite{sazdanovic2018categorification} that categorifies Stanley's chromatic symmetric function \cite{stanley1995symmetric}. 
% In the process of categorification, properties of polynomials often get lifted to more refined properties at the categorified level, and the present context is no exception. 
One of the goals of categorifying invariants is to obtain a richer theory and more refined invariants. As a great side effect, properties of the original invariants, polynomials in our case, should be present on the categorified level, as a more refined, structural or computational tool. For example, the defining deletion-contraction relation corresponds to a short exact sequence of chain groups, and therefore, long exact sequence in (co)homology in the work of Helme-Guizon, Rong and Eastwood and Huggett. 

The main result of this paper builds on the thin poset approach to Khovanov-like homology theories developed by the first author in \cite{chandlerthesis, chandlerthin}, along with the tools of algebraic Morse theory \cite{forman2002user,  jollenbeck2005resolution, skoldberg2006morse}, to categorify Whitney's broken circuit model for the chromatic polynomial \cite{whitney1932logical}, and the chromatic symmetric function \cite{stanley1995symmetric}.  This work provides the right setting for extending the work of Baranovsky and Zubkov \cite{baranovsky2017chromatic} on generalizing Helme-Guizon and Rong chromatic homology to brace algebras from trees to all graphs. In general, it could provide ideas for computing cohomology of configuration  spaces  and their generalizations \cite{bendersky1991cohomology, cooper2019configuration, kriz,totaro} over $\mathbb{Z}$ and $\mathbb{Z}_p.$

This paper is organized as follows. In Section \ref{pocohom}, we give background information on how to obtain cohomology theories from functors on posets as described by the first author in \cite{chandlerthesis, chandlerthin}. 
In Section \ref{sectionmorse}, we provide a brief background on algebraic Morse theory, and give an application to the cohomology theories constructed from functors on posets in Theorem \ref{morsethinposet}. 
In Section \ref{whitney}, we prove Lemma \ref{bcmatching}, where we establish an acyclic matching on the Hasse diagram of the Boolean lattice of spanning subgraphs of a given graph, matching pairs of subgraphs containing broken circuits which have the same vertex partition in terms of connected components. In Section \ref{brokenchromatic}, we describe Helme-Guizon and Rong's chromatic homology in terms of the cohomology of a functor on a poset. An application of Theorem \ref{morsethinposet},  together with the Lemma \ref{bcmatching}, gives Theorem \ref{categcwhitney}, a categorification of Whitney's broken circuit theorem for chromatic polynomials. We are able to use this result to obtain bounds on the homological span for chromatic homology. In Section \ref{brokensymmetric}, we prove the analogous results for the chromatic symmetric homology of Sazdanovi\'{c} and Yip. Finally in Section \ref{spectralsequencesec}, we show that the first page in Sazdanovi\'{c} and Baranovsky's spectral sequence from chromatic homology to the cohomology of a graph configuration space can be replaced by a broken circuit model, thus yielding bounds on the homological span for cohomology rings of graph configuration spaces.
 \section*{Acknowledgments} The authors are thankful to M. Yip for many useful discussions especially on the preliminary ideas for categorifying Whitney's Broken circuit theorem. Many thanks to V. Baranovsky for the invaluable insight into generalizing chromatic homology to non-commutative algebras. 
 RS was partially supported by the Simons Collaboration Grant 318086.

\section{Cohomology Theories from Thin Posets}
\label{pocohom}

% We begin with the necessary definitions and background on the theory of partially ordered sets. If needed, the reader can refer to \cite{stanley1998enumerative} for anything not mentioned here.
This section provides a background on the theory of partially ordered sets. For more details see \cite{stanley1998enumerative}.
A \textit{partially ordered set (poset)} is a set $P$ together with a reflexive, antisymmetric, transitive relation, denoted $\leq$. A \textit{cover relation} in a poset is a pair $(x,y)$ with $x\leq y$ such that there is no $z\in P$ with $x<z<y$, and in this case we write $x\lessdot y$. Let $C(P)$ denote the set of all cover relations in $P$. The \textit{Hasse diagram} of a finite poset is the directed graph $(P,C(P))$ with nodes $P$ and a directed edge drawn from left to right from $x$ to $y$ if and only if $x\lessdot y$. A poset $P$ is \textit{graded} if there is a function $\rk:P\to\N$ so that $x\lessdot y$ implies $\rk(y)=\rk(x)+1$. A  graded poset is \textit{thin} if every nonempty closed interval $[x,y]=\{z\in P \ | \ x\leq z\leq y\}$ with $\rk(y)=\rk(x)+2$ consists of exactly 4 elements. In a thin poset, such an interval is called a \textit{diamond}. A \textit{balanced coloring} of a thin poset is a function $c:C(P)\to \{+1,-1\}$ such that each diamond in $P$ has an odd number of cover relations colored by $-1$'s.

A poset $P$ can be thought of equivalently as a category with objects $P$ and with a unique morphism from $x$ to $y$ if and only if $x\leq y$ in $P$. A functor on a poset is equivalent to a labeling of the nodes and edges of the Hasse diagram of the poset so that for any $x\leq y$, compositions along any two directed paths between $x$ and $y$ agree. A poset $P$ can also be thought of as a topological space with underlying set $P$ and open sets consisting of the lower order ideals in $P$. Functors on $P$ can be identified with presheaves in this viewpoint, the stalk of a sheaf at $x\in P$ corresponding to the functor value at $x$. In this section, we will construct a cohomology theory $H^*(F)$ associated to a functor $F:P\to\mc{A}$ where $P$ is a thin poset. For more details on this construction, see \cite{chandlerthin}. More generally, given any poset $P$, one can use the presheaf viewpoint to define a cohomology theory built from the right derived functors of the left exact global section functor. %In some cases (for example when $P$ is a CW poset) these cohomology theories coincide. 
The presheaf perspective will not be used in this paper, but more details can be found in the work of Turner and Everitt \cite{everitt2009homology, turnereverittcellular}. 

Let $\mc{A}$ be an abelian category, $P$ a finite thin poset with balanced coloring $c$, and $F:P\to\mc{A}$ a covariant functor. Define a cochain complex, denoted $C^*(F,c)$ with differential $\delta$ defined by the formulas
\begin{equation}\label{pococomplex}
    C^k(F,c)=\bigoplus_{\rk(x)=k}F(x)\ \ \ \ \ \ \ \ \ \delta^k=\mathop{\sum_{x\lessdot y }}_{\rk(x)=k} c(x\lessdot y)F(x\lessdot y).
\end{equation}
Lemma \ref{welldef} shows $\delta^2=0$ so these complexes are well defined. Denote the cohomology of $C^*(F,c)$ by $H^*(F,c)$. For a more complete treatment of the association of homology theories to functors on thin posets, see \cite{chandlerthin}. In the above definition, morphisms from $C^k$ to $C^{k+1}$ can be identified as matrices with columns indexed by elements in $P$ of rank $k$ and rows indexes by elements in $P$ of rank $k+1$, where the matrix element indexed by elements $x$ and $y$ (with rank $i$ and $i+1$ resp.) is a morphism $F(x)\to F(y)$. Then $\delta^k$ can be identified as the matrix with $c(x\lessdot y)F(x\lessdot y)$ in the matrix element indexed by $x$ and $y$.

\begin{remark}
In \cite{chandlerthin}, the complex which we write as $C^*(F,c)$ is denoted $C^*(P,\mc{A},F,c)$, but for our purposes in this paper, we choose the shorter notation. 
\end{remark}

\begin{lemma} $C^*(F,c)$ as defined in Equation \ref{pococomplex} is a cochain complex.  \label{welldef} 
\end{lemma}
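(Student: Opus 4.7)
The plan is to verify $\delta^{k+1}\circ\delta^k=0$ by grouping contributions according to rank-$2$ intervals. Writing out the composition using the matrix description of $\delta$, I would get
\[
\delta^{k+1}\circ\delta^k \;=\; \mathop{\sum_{x\lessdot y \lessdot z}}_{\rk(x)=k} c(x\lessdot y)\,c(y\lessdot z)\, F(y\lessdot z)\circ F(x\lessdot y),
\]
so the natural thing is to fix $x$ and $z$ with $\rk(z)=\rk(x)+2$ and collect the terms mediated by some $y\in(x,z)$. By the thinness hypothesis, the interval $[x,z]$ is a diamond with exactly four elements $\{x,y_1,y_2,z\}$, so only two terms contribute for each such pair $(x,z)$.

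Next I would use functoriality of $F$: the two directed paths $x\lessdot y_1\lessdot z$ and $x\lessdot y_2\lessdot z$ both represent the unique morphism $x\leq z$ in the poset viewed as a category, so
\[
F(y_1\lessdot z)\circ F(x\lessdot y_1) \;=\; F(x\leq z) \;=\; F(y_2\lessdot z)\circ F(x\lessdot y_2).
\]
Thus the contribution of the diamond $[x,z]$ to $\delta^{k+1}\circ\delta^k$ factors as
\[
\bigl(c(x\lessdot y_1)\,c(y_1\lessdot z)\;+\;c(x\lessdot y_2)\,c(y_2\lessdot z)\bigr)\, F(x\leq z).
\]

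Finally I would invoke the balanced coloring hypothesis: the four cover relations in the diamond have an odd number of $-1$'s, so the product of all four signs equals $-1$. This rearranges to $c(x\lessdot y_1)\,c(y_1\lessdot z) = -\,c(x\lessdot y_2)\,c(y_2\lessdot z)$, and the two terms cancel. Summing over all $(x,z)$ yields $\delta^{k+1}\circ\delta^k=0$.

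The main obstacle is really just unpacking the notation; there is no deep step. The essential conceptual inputs are exactly the two structural hypotheses, used once each: thinness ensures that every rank-$2$ interval contains precisely two intermediate elements so the local cancellation argument is even possible, and the balanced coloring condition is tailored precisely so that the two surviving sign products are negatives of each other. The only mild care needed is to keep the correspondence between the formal sum over $x\lessdot y\lessdot z$ and the sum over diamonds consistent, which is immediate once the decomposition is written as above.
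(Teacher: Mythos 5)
Your proof is correct and follows essentially the same argument as the paper: examine the $(x,z)$-matrix component of $\delta^2$, use thinness to reduce to the two paths through a diamond, factor out $F(x\leq z)$ by commutativity of $F$ on diamonds, and cancel the two sign products via the balanced coloring condition. You spell out the sign cancellation slightly more explicitly than the paper does (product of all four signs is $-1$, hence the two two-fold products are negatives), but the structure and ingredients are the same.
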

\begin{proof} Consider the matrix of $d^2$ with respect to the direct sum decomposition $C^k=\oplus_{\rk(x)=k}F(x)$, $C^{k+2}=\oplus_{\rk(y)=k+2}F(y)$. Let $\rk(x_0)=k$ and $\rk(y_0)=k+2$. The closed interval from $x_0$ to $y_0$ in $P$ is a diamond $\{x_0,a,b,y_0\}$. Thus the $(x_0,y_0)$-component of the matrix of $d^2$ is $c(a\lessdot y_0)F(a\lessdot y_0)\circ c(x_0\lessdot a)F(x_0\lessdot a)+c(b\lessdot y_0)F(b\lessdot y_0)\circ c(x_0\lessdot b)F(x_0\lessdot b)$. Since $F$ is commutative on diamonds, this is equal to $[c(a\lessdot y_0)c(x_0\lessdot a)+c(b\lessdot y_0)c(x_0\lessdot b)]\cdot [F(a\lessdot y_0)\circ F(x_0\lessdot a)]=0$ since $c$ is a balanced coloring. 
\end{proof}

Examples of thin posets include face posets of polytopes, Bruhat orders on Coxeter groups, face posets of regular CW complexes (CW posets), and Eulerian posets \cite{bjorner1984posets, stanley1994survey}.
In 1984, Bj\"{o}rner proved that CW posets are exactly those posets with $\hat{0}$ whose lower intervals are homeomorphic to spheres (that is, the order complex of each open interval  $(\hat{0},x)$ is homeomorphic to a sphere) \cite{bjorner1984posets}. In particular, any thin shellable poset is a CW poset \cite[Proposition 4.5]{bjorner1984posets}. 
The following theorem indicates that in the case of CW posets, the association of cohomology theories $(F:P\to\mc{A})\mapsto H^*(F)$ from Equation \ref{pococomplex} is particularly well behaved. 
\begin{theorem}
    \label{functconst}
    \label{cwbalanced}
    \label{balind}
    Let $P$ be a CW poset. Then
    \begin{enumerate}
        \item (\cite[Corollary 2.7.14]{bjorner2006combinatorics}) $P$ is balanced colorable.
        \item  ({\cite[Theorem 6.1]{chandlerthin}}) Any labeling of vertices and edges of the Hasse diagram of $P$ by objects and morphisms of some category $\mc{A}$ which commutes on diamonds of $P$ determines a functor $F:P\to \mc{A}$.
        \item ({\cite[Theorem 6.6]{chandlerthin}}) Let $F:P\to\mc{A}$ be a functor to an abelian category. Then the complex $C^*(F,c)$ is independent of $c$ up to natural isomorphism.
    \end{enumerate}
\end{theorem}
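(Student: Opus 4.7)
The plan is to tackle the three parts separately, with each relying on the topological structure of a CW poset, namely that $P$ is the face poset of a regular CW complex $X$ whose open intervals $(\hat{0},x)$ have order complexes homeomorphic to spheres.

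For part (1), I would construct a balanced coloring directly from the cellular incidence function of $X$. Fix an orientation on each cell of $X$; then for every cover relation $\sigma\lessdot\tau$ the standard incidence coefficient $[\sigma:\tau]\in\{+1,-1\}$ is defined. The identity $\partial^2=0$ on the cellular chain complex, restricted to a diamond $\{x_0,a,b,y_0\}$, reads $[x_0\!:\!a][a\!:\!y_0]+[x_0\!:\!b][b\!:\!y_0]=0$, so the two products have opposite signs and the diamond contains an odd number of $-1$'s. Setting $c(x\lessdot y)=[x\!:\!y]$ gives the desired coloring.

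For part (2), given a labeling commuting on every diamond, I would define $F(x\leq y)$ to be the composition along any saturated chain from $x$ to $y$ and verify well-definedness by induction on $\rk(y)-\rk(x)$. The inductive step reduces to showing that any two saturated chains from $x$ to $y$ are connected by a sequence of elementary diamond moves, each of which preserves the composition by the hypothesis. This connectivity is exactly the $1$-connectivity of the ``chain graph'' of the interval $[x,y]$, which in turn follows from the fact that the order complex of the open interval $(x,y)$ is homeomorphic to a sphere, hence path-connected, and simply connected when $\rk(y)-\rk(x)\geq 3$ — precisely the input needed to propagate commutativity from diamonds to arbitrary intervals.

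For part (3), given two balanced colorings $c,c'$, define $\epsilon(x\lessdot y)=c(x\lessdot y)\,c'(x\lessdot y)\in\{\pm 1\}$. On each diamond $\epsilon$ has an even number of $-1$'s, i.e., $\epsilon$ is a $\Z_2$-valued $1$-cocycle on the $2$-complex with $1$-cells the cover relations and $2$-cells the diamonds. I would show $\epsilon$ is a coboundary, i.e., $\epsilon(x\lessdot y)=\eta(x)\eta(y)$ for some $\eta:P\to\{\pm 1\}$, by setting $\eta(\hat 0)=1$ and extending along saturated chains, with consistency guaranteed by the same simple-connectivity input used in part (2). Once $\eta$ is produced, the map on chain complexes given on each summand by multiplication by $\eta(x)\cdot\id_{F(x)}$ is a chain isomorphism $C^*(F,c)\to C^*(F,c')$, and is visibly natural in $F$.

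The main obstacle across parts (2) and (3) is the topological input: proving that the auxiliary combinatorial complexes attached to intervals of $P$ are sufficiently simply connected to upgrade local diamond-level data to global data. This is the essential role of the CW hypothesis, and it is exactly where Björner's characterization of CW posets in terms of sphericity of lower intervals is used.
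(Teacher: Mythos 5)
The paper does not prove this theorem itself; it cites Björner--Stanley for part (1) and Chandler's thesis/paper for parts (2) and (3), so there is no ``paper's proof'' to compare against. Judging your proposal on its own merits: part (1) is correct and is exactly the standard argument. For parts (2) and (3), your overall strategy (reduce to connectivity of the graph of maximal chains under diamond moves, then extend multiplicatively along a chain) is sound, but the topological input you invoke is not quite the right one, and is off by one where it is stated quantitatively.

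Concretely: if $r=\rk(y)-\rk(x)$, then the order complex of the open interval $(x,y)$ in a CW poset is homeomorphic to $S^{r-2}$, which is simply connected only for $r\geq 4$; for $r=3$ it is a circle. More importantly, what you actually need for part (2) is not $\pi_1$ of the order complex but connectedness of the ``dual graph'' of maximal chains, where two maximal chains are adjacent if they differ in a single rank (a diamond move). Path-connectedness of a simplicial complex does not in general imply connectedness of its dual graph (two triangles sharing only a vertex are a counterexample), and simple connectedness is neither sufficient nor available for $r=3$. The correct input is that $\Delta(x,y)$ is a \emph{strongly connected pseudomanifold}: thinness of $P$ gives the pseudomanifold property (every codimension-one face lies in exactly two facets, i.e.\ every length-two gap is a diamond), and being a triangulated connected manifold (true for $S^{r-2}$ with $r\geq 3$) gives strong connectivity, so the facet-adjacency graph, which is exactly your chain graph, is connected. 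For $r=2$ the claim is the diamond-commutativity hypothesis itself. With this corrected input your part (3) argument also goes through, since well-definedness of $\eta$ along saturated chains from $\hat 0$ only requires that any two maximal chains of $[\hat 0,y]$ be connected by diamond moves --- the same connectedness, not simple connectedness, of the chain graph. You do not in fact need the stronger claim that $H^1$ of the Hasse-plus-diamonds $2$-complex vanishes, even though your cocycle/coboundary phrasing suggests it.
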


\begin{remark}
According to Theorem \ref{balind}, we may omit $c$ from the expression $H^*(F,c)$ to simplify our notation to $H^*(F)$.
\end{remark}

Let $\mc{A}$ be an abelian category, and let $\mc{C}^b(\mc{A})$ denote the category of bounded cochain complexes $C$ in $\mc{A}$, that is, such that $C^i=0$ for all but finitely many $i$. Note that $\mc{C}^b(\mc{A})$ is also an abelian category. Let $K_0(\mc{A})$ denote the \textit{Grothendieck group} of $\mc{A}$, that is, the free abelian group generated by the set of isomorphism classes $\{[X] \ | \ X\in\Ob(\mc{C})\}$ modulo the relations $[X]=[X^\prime]+[X^{\prime\prime}]$ for each short exact sequence $0\to X^\prime\to X\to X^{\prime\prime}\to 0$ in $\mc{A}$. Additionally, if $\mc{A}$ has a monoidal structure $\otimes$, then $K_0(\mc{A})$ inherits the structure of a ring, with product $[X]\cdot[Y]=[X\otimes Y]$.

\begin{theorem}[{\cite[Theorem 9.2.2]{weibel2013k}}]
\label{eulercharacteristic}
Let $\mc{A}$ be a monoidal abelian category. The Euler characteristic  $\chi:K_0(\mc{C}^b(\mc{A}))\to K_0(\mc{A})$ 
 that sends $  [C]\mapsto \sum_{i\in\Z} (-1)^i[H^i(C)]$ is a ring isomorphism. 
 Furthermore, we have
\begin{equation}
\sum_{i\in\Z} (-1)^i[H^i(C)]=\sum_{i\in\Z} (-1)^i[C^i]
\end{equation}
\end{theorem}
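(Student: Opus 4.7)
The plan is to prove this in four steps: establishing the key chain-level identity, deducing well-definedness of $\chi$, verifying the ring homomorphism property, and constructing a two-sided inverse.

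First, I would prove the identity $\sum_i(-1)^i[H^i(C)]=\sum_i(-1)^i[C^i]$ directly in $K_0(\mc{A})$. For any bounded complex $C$ with differential $d$, let $Z^i=\ker d^i$ and $B^i=\im d^{i-1}$. The short exact sequences $0\to Z^i\to C^i\to B^{i+1}\to 0$ and $0\to B^i\to Z^i\to H^i(C)\to 0$ give relations $[C^i]=[Z^i]+[B^{i+1}]$ and $[Z^i]=[B^i]+[H^i(C)]$ in $K_0(\mc{A})$. Combining these, $[C^i]=[B^i]+[H^i(C)]+[B^{i+1}]$, so taking alternating sums over the (finitely many) nonzero terms lets the $[B^i]$ contributions telescope, leaving the claimed identity.

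Second, I would use this identity to show $\chi$ descends to a well-defined map on $K_0(\mc{C}^b(\mc{A}))$. A short exact sequence $0\to C'\to C\to C''\to 0$ in $\mc{C}^b(\mc{A})$ is degree-wise short exact, giving $[C^i]=[C'^i]+[C''^i]$ for each $i$; multiplying by $(-1)^i$ and summing, then applying the identity from step one to each side, yields $\chi([C])=\chi([C'])+\chi([C''])$. Third, for the ring homomorphism property, I would work with the total tensor product complex $(C\otimes D)^n=\bigoplus_{i+j=n} C^i\otimes D^j$, which is bounded when $C$ and $D$ are. At the chain level, $\sum_n(-1)^n[(C\otimes D)^n]=\sum_n(-1)^n\sum_{i+j=n}[C^i][D^j]=\left(\sum_i(-1)^i[C^i]\right)\left(\sum_j(-1)^j[D^j]\right)$, and the identity from step one lets me convert both sides to cohomology classes, establishing $\chi([C]\cdot[D])=\chi([C])\cdot\chi([D])$. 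Multiplicative identity is the complex concentrated in degree $0$ with value the monoidal unit, which clearly maps to the unit of $K_0(\mc{A})$.

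Finally, I would construct the inverse $\psi:K_0(\mc{A})\to K_0(\mc{C}^b(\mc{A}))$ sending $[A]$ to $[A[0]]$, the complex with $A$ in degree $0$. Well-definedness is immediate since a short exact sequence in $\mc{A}$ gives a degree-wise short exact sequence of complexes. The composite $\chi\circ\psi$ is the identity because $H^0(A[0])=A$ and $H^i(A[0])=0$ otherwise. For $\psi\circ\chi=\id$, I would show $[C]=\sum_i(-1)^i[H^i(C)[0]]$ in $K_0(\mc{C}^b(\mc{A}))$ for every bounded $C$: the brutal truncations $\sigma^{\geq k}C$ fit into short exact sequences of complexes relating $C$ to its cohomology shifted into degree $0$, together with acyclic pieces whose classes vanish in $K_0(\mc{C}^b(\mc{A}))$ (via the mapping cone of the identity, which splits as $[A[0]]+[A[1]]=0$, giving $[A[n]]=(-1)^n[A[0]]$).

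The main obstacle I anticipate is the last identification, $\psi\circ\chi=\id$, which requires carefully showing acyclic bounded complexes are zero in $K_0(\mc{C}^b(\mc{A}))$ and that a general bounded complex is equivalent, in $K_0$, to the sum of its (shifted) cohomology objects. Everything else is essentially bookkeeping with the telescoping argument of step one.
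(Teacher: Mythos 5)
First, note that the paper does not prove this statement; it is cited verbatim from Weibel, so there is no internal proof to compare against, and your proposal must be assessed on its own terms. Steps one through three (the telescoping identity via the two short exact sequences, descent of $\chi$ to $K_0$, and multiplicativity via the total tensor complex) are correct and standard, and you correctly flagged step four as the crux.

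The gap in step four is genuine and, with the paper's stated definitions, cannot be filled. Your justification --- ``via the mapping cone of the identity, which splits as $[A[0]]+[A[1]]=0$'' --- is circular: the degree-wise split short exact sequence $0\to A[1]\to\mathrm{cone}(\mathrm{id}_A)\to A[0]\to 0$ yields only $[\mathrm{cone}(\mathrm{id}_A)]=[A[0]]+[A[1]]$; to conclude the right-hand side vanishes you must already know $[\mathrm{cone}(\mathrm{id}_A)]=0$, which is exactly what you are trying to prove. Worse, if $K_0(\mc{C}^b(\mc{A}))$ is taken literally as the Grothendieck group of the abelian category of bounded complexes, as the paper defines it, the needed vanishing is actually false: for each $n$ the exact functor $C\mapsto C^n$ induces $\pi_n\colon K_0(\mc{C}^b(\mc{A}))\to K_0(\mc{A})$ with $\pi_n([C])=[C^n]$, and $\pi_0([A[0]]+[A[1]])=[A]$, which is nonzero whenever $[A]\neq 0$; hence $[A[0]]+[A[1]]\neq 0$, $\chi$ has nontrivial kernel, and $\chi$ is not injective. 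What Weibel's Theorem~9.2.2 actually asserts is an isomorphism with the \emph{Waldhausen} $K_0$ of bounded complexes, where weak equivalences are quasi-isomorphisms; there acyclic complexes vanish by construction, the relation $[A[n]]=(-1)^n[A[0]]$ follows, and your truncation argument for $\psi\circ\chi=\mathrm{id}$ does go through. So your outline is the right one for the true theorem, but it requires the Waldhausen (equivalently, derived-category) $K_0$ rather than the naive abelian-category Grothendieck group of $\mc{C}^b(\mc{A})$. The paper's imprecision is harmless for its later use, which only needs the identity $\sum_i(-1)^i[H^i(C)]=\sum_i(-1)^i[C^i]$ as an equation in $K_0(\mc{A})$, not the isomorphism itself --- but it does make your step four strictly unprovable as written.
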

From now on, we identify $K_0(\mc{C}^b(\mc{A}))$ with $K_0(\mc{A})$ via the isomorphism in Theorem \ref{eulercharacteristic}.

\begin{ex}\label{euler}
Let $R$ be a principal ideal domain. Let $\Rmod$ denote the (abelian) category of finitely generated (left) $R$-modules. There is an isomorphism of rings:
$$K_0(\Rmod)\cong \Z \ \ \ \ \ \ \ \ \ \ \ \  [M]\mapsto \rk M$$
(see for example \cite[Section 2]{weibel2013k}). In light of Theorem \ref{eulercharacteristic} we have the following identification of rings:
\[K_0(\mc{C}^b(\Rmod))\cong \Z \ \ \ \ \ \ \ \ \ \ \ \  [C]\mapsto \sum_{i\in\Z}(-1)^i\rk C^i.\]
\end{ex}

\begin{ex}\label{gradedeuler}
Let $R$ be a principal ideal domain. Let $\Rgmod$ denote the (abelian) category of finitely generated graded (left) $R$-modules. Given a graded module $M=\oplus_{i\in\Z}M^i\in\Rgmod$, the \textit{graded rank} of $M$ is $\qrk M=\sum_{i\in\Z}q^i\rk M^i$. Let $\Z[q,q^{-1}]$ denote the ring of Laurent polynomials in the variable $q$. There is an isomorphism of rings:
$$K_0(\Rgmod)\cong \Z[q,q^{-1}] \ \ \ \ \ \ \ \ \ \ \ \  [M]\mapsto \qrk M=\sum_{i\in\Z} q^i\rk M^i$$
(see \cite[Example 7.14]{weibel2013k}).
In light of Theorem \ref{eulercharacteristic} we have the following identification of rings:
$$K_0(\mc{C}^b(\Rgmod))\cong \Z[q,q^{-1}] \ \ \ \ \ \ \ \ \ \ \ \  [C]\mapsto \sum_{i\in\Z}(-1)^i\qrk C^i=\sum_{i,j\in\Z}(-1)^i q^j \rk C^{i,j}.$$
The map $[C]\mapsto \sum_{i,j\in\Z}(-1)^i q^j \rk C^{i,j}$ is usually referred to as the \textit{graded Euler characteristic} (see for example \cite{khovanov1999categorification} or \cite{bar2002khovanov}).
\end{ex}

\begin{theorem}\label{pococateg}
Let $\mc{A}$ be a monoidal abelian category, $P$ a balanced colorable thin poset, and $F:P\to\mc{A}$ a functor. Then in $K_0(\mc{C}^b(\mc{A}))$ we have the following equality:
\[[H^*(P,F)]=\sum_{x\in P}(-1)^{\rk(x)}[F(x)].\]
\end{theorem}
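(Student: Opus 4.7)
The plan is to reduce the statement directly to Theorem \ref{eulercharacteristic}, using only the definition of $C^*(F,c)$ from Equation \ref{pococomplex} and the additivity of the class map in the Grothendieck group.

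First, I would note that since $C^*(F,c)$ is, by Lemma \ref{welldef}, a well-defined object of $\mc{C}^b(\mc{A})$ (it is bounded because $P$ is finite), the class $[C^*(F,c)] \in K_0(\mc{C}^b(\mc{A}))$ makes sense. Under the identification $K_0(\mc{C}^b(\mc{A})) \cong K_0(\mc{A})$ provided by Theorem \ref{eulercharacteristic}, this class is computed in two different ways: as $\sum_{i\in\Z}(-1)^i[H^i(C^*(F,c))]$, which is by definition $[H^*(F,c)] = [H^*(P,F)]$, and as $\sum_{i\in\Z}(-1)^i[C^i(F,c)]$. So the content of the theorem is precisely the identification of the right-hand side of the latter sum with $\sum_{x\in P}(-1)^{\rk(x)}[F(x)]$.

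Next I would unpack the chain groups. By definition, $C^k(F,c) = \bigoplus_{\rk(x)=k} F(x)$, and since for each pair of objects $X, Y$ in an abelian category the sequence $0 \to X \to X \oplus Y \to Y \to 0$ is short exact, additivity in $K_0(\mc{A})$ gives
\[
[C^k(F,c)] \;=\; \sum_{\rk(x)=k}[F(x)].
\]
(Here I use finiteness of $P$ so that the direct sum is finite and a straightforward induction on the number of summands applies.) Substituting this into the Euler characteristic formula and interchanging the finite sums yields
\[
\sum_{i\in\Z}(-1)^i[C^i(F,c)] \;=\; \sum_{i\in\Z}(-1)^i \sum_{\rk(x)=i}[F(x)] \;=\; \sum_{x\in P}(-1)^{\rk(x)}[F(x)].
\]

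Combining these observations with the equality $\sum_i(-1)^i[H^i(C)] = \sum_i(-1)^i[C^i]$ from Theorem \ref{eulercharacteristic} immediately gives the claim. I do not expect any genuine obstacle here: the statement is a direct categorified Euler-characteristic computation. The only minor care needed is to check that $C^*(F,c)$ really does lie in $\mc{C}^b(\mc{A})$ (guaranteed by $P$ finite) and to note that balanced colorability of $P$, together with Lemma \ref{welldef}, is what ensures $C^*(F,c)$ is a chain complex in the first place so that its cohomology is defined.
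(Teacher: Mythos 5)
Your proof is correct and follows essentially the same route as the paper's: both reduce to Theorem \ref{eulercharacteristic} by computing the Euler characteristic of $C^*(F,c)$ and using $C^i(F,c)=\bigoplus_{\rk(x)=i}F(x)$ together with additivity in $K_0$. Your version simply spells out the additivity step and the boundedness check a bit more explicitly.
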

\begin{proof}
We identify $H^*(P,F)$ in $\mc{C}^b(\mc{A})$ as a chain complex with all differentials being zero maps. Then by Theorem \ref{eulercharacteristic}, we have $[H^*(P,F)]=[C(P,F,c)]$ where $c$ is a balanced coloring of $P$. Since $C^i(P,F,c)=\oplus_{\rk(x)=i}F(x)$, the desired equality follows from Theorem \ref{eulercharacteristic}.
\end{proof}
Theorem \ref{pococateg} provides the following categorification technique. To categorify an element $g$ of a ring $R$, one should find a monoidal abelian category $\mc{A}$ with an isomorphism $\phi:K_0(\mc{A})\to R$, and an object $G\in\mc{C}^b(\mc{A})$ such that $[G]=g$ under the identification $\phi\circ\chi$ of $K_0(\mc{C}^b(\mc{A}))$ with $R$. In this case we say that $G$ categorifies $g$. Given an object (for example, a poset invariant) of the form $g(P)=\sum_{x\in P} (-1)^{\rk(x)}g(x)$ where $g:P\to R$ is a function from a thin poset $P$ to a ring $R$ (such an expression is called a \text{rank alternator} for $f$ in \cite{chandlerthin}), one can categorify $g$ by constructing a functor $G:P\to\mc{A}$, where $K_0(\mc{A})\cong R$ and $[G(x)]=g(x)$, in which case we have $[H^*(G)]=g(P)$.

%Many well known cohomology theories can be constructed in this way, the most famous of which being Khovanov homology \cite{bar2002khovanov}. In this paper, we use algebraic Morse theory to give a cancellation technique for computing homology of such chain complexes, and give an application to the chromatic homology \cite{helme2005categorification} of Helme-Guizon and Rong. 

\section{Algebraic Morse Theory and an Application to Thin Poset Cohomology}\label{sectionmorse}

Morse theory for simplicial complexes, defined by Forman \cite{forman2002user}, provides tools that greatly simplify homology calculations. 
In \cite{skoldberg2006morse}, Skoldberg describes a version of discrete Morse theory for arbitrary chain complexes (also done independently by J\"{o}llenbeck and Welker in \cite{jollenbeck2005resolution}). In this section we review some results from \cite{skoldberg2006morse} and give an application of the main theorem of algebraic Morse theory in the setting of thin posets and their associated cohomology theories.

\begin{defn}
A \textit{based complex} of $R$-modules is a cochain complex $(K,d)$ of $R$-modules with a direct sum decomposition $K_n=\oplus_{\alpha\in I_n}K_\alpha$ where the $I_n$ are mutually disjoint index sets. Denote the cohomology of a based complex $K$ by $H^*(K)$. For $\alpha\in I_m$ and $\beta\in I_{m+1}$, let $d_{\beta,\alpha}$ denote the component of $d$ from $K_\alpha$ to $K_\beta$:
$$d_{\beta,\alpha}=K_\alpha\hookrightarrow K_m\xrightarrow[]{d}K_{m+1}\twoheadrightarrow K_\beta.$$ Given a based complex $K$, one can construct a directed graph $G_K$ with vertex set $V=\cup_nI_n$ and a directed edge $\alpha\to\beta$ whenever $d_{\beta,\alpha}\neq 0$. 
\end{defn}

 A \textit{matching} $M$ on a directed graph $D=(V,E)$ is a collection of disjoint edges in $E$. A matching $M$ is called \textit{complete} if every vertex in $D$ is incident with some edge in $M$. 
Define $D^M$ to be the directed graph  gotten from $D$ by reversing the direction of all arrows in $M$. For a based complex $K$, say a matching $M$ on $G_K$ is \textit{acyclic} if there are no directed cycles in $G_K^M$. Call the matching $M$ \textit{Morse} if $M$ is acyclic and for each edge $\alpha\to\beta$ in $M$, $d_{\beta,\alpha}$ is an isomorphism. A vertex $v$ in $G_K^M$ is called \textit{$M$-critical} if $v$ is not incident with any of the edges in $M$. Let $M^0$ denote the set of $M$-critical vertices in $G_K$, and let $M^1$ denote the set of vertices in $G_K$ which are incident with some edge in $M$. 
The following theorem is essential for our results.

\begin{theorem}\label{morsetheorem}\label{acycliccorollary}\label{subcomplexacyclic}
Let $K$ be a based complex, and let $M$ be a Morse matching on $K$.
\begin{enumerate}
    \item If $L=\oplus_{\alpha\in M^0}K_\alpha$ is a subcomplex of $K$, then $K$ is homotopy equivalent to $L$.
    \item If $M$ is complete, then $K$ is homotopy equivalent to the zero complex.
    \item If $L=\oplus_{\alpha\in M^1}K_\alpha$ is a subcomplex, then $H^*(K)\cong H^*(K/L)$.
\end{enumerate}
\end{theorem}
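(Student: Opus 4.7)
The approach is to prove all three parts as consequences of a single flow/Gauss-elimination construction due to Sk\"oldberg, with parts (2) and (3) reducing formally to (1). For (1) I would construct an explicit deformation retract of $K$ onto the subcomplex $L$; for (2) I would apply (1) with $L = 0$, which is vacuously a subcomplex when $M^0 = \emptyset$; for (3) I would restrict $M$ to $L$ to obtain a complete Morse matching on $L$, apply (2) to conclude $L \simeq 0$, and then use the short exact sequence $0 \to L \to K \to K/L \to 0$ together with the resulting long exact sequence in cohomology to obtain $H^*(K) \cong H^*(K/L)$.

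For (1), the key construction is a degree $-1$ endomorphism $s \colon K \to K$ whose only nonzero components are the inverses $s|_{K_\beta} = (d_{\beta,\alpha})^{-1}$ for each matched pair $\alpha \to \beta \in M$; the Morse condition ensures each such $d_{\beta,\alpha}$ is invertible. From $s$ one builds an endomorphism $\phi = \id - (ds + sd)$, or more carefully a truncated version that iteratively collapses each summand $K_\gamma$ along zigzag paths $\gamma \to \beta_1 \leftarrow \alpha_1 \to \beta_2 \leftarrow \cdots$ in $G_K$ alternating forward $d$-arrows with reversed matched arrows. Acyclicity of $M$---the hypothesis that $G_K^M$ has no directed cycles---is precisely what forces these paths to terminate, so that a sufficiently large iterate $\pi = \phi^N$ stabilizes to an idempotent chain endomorphism whose image is supported on the critical summands $\bigoplus_{\alpha \in M^0} K_\alpha = L$.

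The subcomplex hypothesis in (1) then enters as follows: since $L$ is closed under $d$, we have $d_{\beta,\alpha} = 0$ whenever $\alpha \in M^0$ and $\beta \in M^1$, so $sd$ vanishes on critical summands and $\phi|_L = \id_L$. Consequently $\pi \circ \iota = \id_L$, where $\iota \colon L \hookrightarrow K$ is the inclusion, and the induced differential on $\im(\pi) = L$ coincides with $d|_L$. A telescoping formula of the form $H = \sum_{k=0}^{N-1} s \phi^k$ then supplies a chain homotopy between $\iota \circ \pi$ and $\id_K$, completing the proof of the homotopy equivalence in (1); parts (2) and (3) follow from (1) by the reductions described in the first paragraph.

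The principal obstacle is verifying that the flow $\phi$ and its iterates are well defined and yield a chain map: without acyclicity the zigzag correction terms would not truncate, and even with acyclicity one needs a mild local finiteness assumption (built into the definition of a based complex in \cite{skoldberg2006morse}) to ensure stabilization happens at finite $N$. The detailed verification that $\phi$ is a chain map, that $H$ is a chain homotopy, and that no sign subtleties intervene between the cochain and chain setups is straightforward bookkeeping carried out in \cite{skoldberg2006morse}, which the plan above essentially follows.
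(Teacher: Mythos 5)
Your proposal is correct and follows essentially the same route as the paper: part (1) is Sk\"oldberg's Corollary 2 (which you sketch rather than cite), part (2) is the $M^0=\emptyset$ special case, and part (3) uses the short exact sequence $0\to L\to K\to K/L\to 0$ with acyclicity of $L$ obtained by applying part (2) to the complete restricted matching on $L$. The only cosmetic difference is that you unwind the zigzag/Gauss-elimination argument behind part (1) instead of citing it, but as you note this is Sk\"oldberg's own construction.
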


\begin{proof}
Part 1 is stated in \cite{skoldberg2006morse} Corollary 2. For part 2, since there are no critical cells, this is immediate from part 1. For part 3, we begin with the short exact sequence $0\to L\to K\to K/L\to 0$ and obtain a long exact sequence
\[\dots\to H^r(L)\to H^r(K)\to H^r(K/L)\to H^{r+1}(L)\to\dots\]
By Corollary \ref{acycliccorollary}, $H^r(L)=0$ for all $r$ so by exactness we have $H^r(K)\cong H^r(K/L)$ for all $r$. 
\end{proof}

Next we use this result in the setting of cohomology of functors on thin posets. First however notice that the above discussion was stated for categories of modules (following \cite{skoldberg2006morse}), but everything works just as well over arbitrary abelian categories. Let $P$ be a thin poset with balanced coloring $c$ and a functor $F:P\to\mc{A}$. Notice by definition that $C^*(F,c)$ is a based complex $K$ with index set $I_n=\{x\in P \ | \ \rk(x)=n\}$, $K_x=F(x)$ and for $x,y\in P$, \begin{equation}\label{thintobased}
    d_{y,x}=\begin{cases}c(x\lessdot y)F(x\lessdot y)& \text{if}\ x\lessdot y\\ 0&\text{otherwise}.\end{cases}
\end{equation} 
Notice that any upper or lower order ideal $\mc{I}$ of $P$, is also a thin poset, and thus we can restrict $F$ and $c$ to $\mc{I}$ obtaining the complex $C^*(F|_\mc{I}, c|_\mc{I})$.

\begin{theorem}\label{morsethinposet}
Let $P$ be a thin poset with balanced coloring $c$ and $F:P\to\mc{A}$ a functor, where $\mc{A}$ is an abelian category. Suppose that $\mc{I}$ is an upper or lower order ideal in $P$ and there is a complete acyclic matching $M$ on $\mc{I}$ such that $F(e)$ is an isomorphism for each $e\in M$. Then 
\[H^*(F)\cong H^*(F|_{P\setminus\mc{I}}).\]
\end{theorem}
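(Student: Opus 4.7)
The plan is to realize $C^*(F,c)$ as a based complex in the sense of Section \ref{sectionmorse} and transfer the matching $M$ on $\mc{I}$ into a Morse matching on this complex, so that Theorem \ref{morsetheorem} applies. The identification via Equation \ref{thintobased} is immediate: the indexing set at rank $n$ is $\{x\in P \mid \rk(x)=n\}$, with $K_x=F(x)$, and the component $d_{y,x}$ equals $c(x\lessdot y)F(x\lessdot y)$ whenever $x\lessdot y$ and vanishes otherwise. Since $c(e)\in\{\pm 1\}$ and $F(e)$ is assumed to be an isomorphism for each $e\in M$, every $d_{y,x}$ corresponding to a matched edge is an isomorphism, so $M$ satisfies the isomorphism requirement of a Morse matching as soon as it can be viewed as a matching on $G_K$.

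Next I would check that $M$ remains acyclic after being regarded as a matching on all of $G_K$ rather than just on the Hasse diagram of $\mc{I}$. The only reversed edges in $G_K^M$ have both endpoints inside $\mc{I}$, so any hypothetical directed cycle in $G_K^M$ must touch $\mc{I}$. Using that $\mc{I}$ is a lower (respectively upper) order ideal, forward edges of the Hasse diagram cannot leave $P\setminus\mc{I}$ for $\mc{I}$ (respectively leave $\mc{I}$ for $P\setminus\mc{I}$); hence a cycle cannot exit $\mc{I}$ and return to consume another reversed edge, so it must lie entirely within $\mc{I}$, contradicting acyclicity there. I would expect verifying this compatibility between the order-ideal hypothesis and acyclicity to be the main subtlety; without it the conclusion fails for general subposets.

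Finally I would apply Theorem \ref{morsetheorem}. Since $M$ is complete on $\mc{I}$, we have $M^1=\mc{I}$ and $M^0=P\setminus\mc{I}$. In the lower order ideal case, $P\setminus\mc{I}$ is an upper order ideal, so the submodule $L=\bigoplus_{x\in P\setminus\mc{I}}F(x)$ is closed under $\delta$ and is canonically identified with $C^*(F|_{P\setminus\mc{I}},c|_{P\setminus\mc{I}})$; Theorem \ref{morsetheorem}(1) then gives a homotopy equivalence $C^*(F,c)\simeq L$ and therefore $H^*(F)\cong H^*(F|_{P\setminus\mc{I}})$. In the upper order ideal case, $L=\bigoplus_{x\in\mc{I}}F(x)$ is itself a subcomplex, the quotient $C^*(F,c)/L$ is canonically $C^*(F|_{P\setminus\mc{I}},c|_{P\setminus\mc{I}})$, and Theorem \ref{morsetheorem}(3) yields $H^*(F)\cong H^*(C^*(F,c)/L)=H^*(F|_{P\setminus\mc{I}})$, completing the proof in both cases. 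The extension from $R$-modules to an arbitrary abelian category $\mc{A}$ is the one already flagged in the text before the theorem statement, so no additional argument is needed there.
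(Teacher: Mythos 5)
Your proof is correct and follows the same strategy as the paper: realize $C^*(F,c)$ as a based complex via Equation~\ref{thintobased}, observe that $M$ becomes a Morse matching on it, and apply Theorem~\ref{morsetheorem} parts (1) or (3) according to whether $\mc{I}$ is a lower or upper order ideal in $P$. The only substantive difference is that you spell out why the acyclicity of $M$ on the Hasse diagram of $\mc{I}$ persists when $M$ is regarded as a matching on the full directed graph $G_K$, a step the paper's proof states without justification; your order-ideal argument for why a directed cycle in $G_K^M$ cannot enter or leave $\mc{I}$ is correct and a worthwhile elaboration.
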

\begin{proof}
Notice that in the based complex corresponding to $C^*(P,F)$ as defined in \ref{thintobased}, $M$ becomes a Morse matching. In the case that $\mc{I}$ is an upper order ideal, it follows that $C^*(F|_\mc{I}, c|_\mc{I})$ is a subcomplex of $C^*(F,c)$ and Corollary \ref{subcomplexacyclic} gives the desired result since the quotient of $C^*(F,c)$ by the subcomplex $C^*(F|_\mc{I}, c|_\mc{I})$ is isomorphic to $C^*(F|_{P\setminus\mc{I}}, c|_{P\setminus\mc{I}})$. In the case that $\mc{I}$ is a lower order ideal, then $M^0=P\setminus\mc{I}$. Since $P\setminus \mc{I}$ is an upper order ideal, it follows that $\bigoplus_{x\in M^0} F(x)$ is the subcomplex $C^*(F|_{P\setminus\mc{I}}, c|_{P\setminus\mc{I}})$ and thus Theorem \ref{morsetheorem} tells us that $C^*(F,c)$ is homotopy equivalent to $C^*(F|_{P\setminus\mc{I}}, c|_{P\setminus\mc{I}})$, and so in particular, $H^*(F)\cong H^*(F|_{P\setminus\mc{I}})$.
\end{proof}

\section{An Acyclic Matching on Spanning Subgraphs}
\label{whitney}

Let $G=(V,E)$ be a finite graph. That is, $V$ is a finite set and $E\subseteq \binom{V}{2}$ is a collection of subsets of size two. A \textit{cycle} (of length $k$) in $G$ is a sequence $x_1,x_2,\dots,x_k,x_1$ of $k$ distinct vertices in $G$ such that $\{x_i,x_{i+1}\}$ is an edge of $G$ for $1\leq i\leq k$ where subscripts are taken modulo $k$. The \textit{edge space} of $G$ is the $\Z_2$-vector space $\mathcal{E}(G)$ with basis $E$, and the \textit{cycle space} of $G$ is the subspace $\mathcal{C}(G)$ of $\mathcal{E}(G)$ spanned by cycles. We will identify each subset $S\subseteq E$ with the element $S=\sum_{e\in S}e\in \mathcal{E}(G)$, and also with the spanning subgraph of $G$ with vertex set $V$ and edge set $S$. Given $S,T\in \mathcal{E}(G)$, $S+T$ is thus identified with the symmetric difference $S\Delta T=S\cup T-S\cap T$. The  well-known fact that a connected graph has an Eulerian circuit if and only if every vertex has even degree (this goes all the way back to Euler's solution to the bridges of K\"{o}nigsberg problem) takes the following form in the present context.

\begin{lemma}
Given $S\subseteq E$, we have $S\in \mathcal{C}(G)$ if and only if $S$ is a union of edge-disjoint cycles.
\label{symmetricdiff}
\end{lemma}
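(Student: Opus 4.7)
The plan is to route both implications through the parity characterization that $S \in \mc{C}(G)$ if and only if every vertex has even degree in the spanning subgraph $S$.

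The backward direction is almost immediate: if $S = C_1 \cup \cdots \cup C_k$ is a union of pairwise edge-disjoint cycles, then the disjointness makes union coincide with symmetric difference, so $S = C_1 + \cdots + C_k$ in $\mc{E}(G)$, and this lies in $\mc{C}(G)$ by definition.

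For the forward direction, I would first record that any element of $\mc{C}(G)$ has all vertex degrees even: a single cycle has degree $0$ or $2$ at every vertex, and since the vertex degree is an additive function modulo $2$ on $\mc{E}(G) \cong \Z_2^E$, the same is true of any $\Z_2$-sum. The remaining content is then the classical Eulerian decomposition: any spanning subgraph $S$ in which every vertex has even degree is a union of edge-disjoint cycles. I would induct on $|S|$, picking a vertex $v$ of positive degree and walking in $S$ without repeating edges; at any intermediate vertex $u \neq v$ an odd number of incident edges have been used so far (two for each previous complete visit, plus the arrival edge) while $\deg_S(u)$ is even, so the walk never gets stuck except by returning to $v$. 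From the resulting closed walk, a cycle $C \subseteq S$ is extracted by the first-repeated-vertex trick, and removing $C$ lowers every vertex degree by $0$ or $2$; the inductive hypothesis then decomposes $S \setminus C$ into edge-disjoint cycles, and adjoining $C$ finishes the decomposition.

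The only point needing mild care is ensuring that the closed walk actually contains a cycle in the strict sense of the paper's definition, i.e., with distinct vertices; this is handled by the standard minimality argument (cut the walk at its first repeated vertex) and poses no real obstacle.
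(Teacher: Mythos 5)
Your proof is correct and follows essentially the same route as the paper's: both reduce the statement to the parity characterization of the cycle space (every vertex has even degree) and then prove the Eulerian decomposition of an even-degree subgraph into edge-disjoint cycles by the same induction on $|S|$, extracting one cycle from a walk and applying the hypothesis to the remainder. The only superficial differences are that the paper cites Diestel's Proposition 1.9.2 for the full biconditional with even degree while you prove only the direction you need, and that you handle the easy direction (edge-disjoint union of cycles lies in $\mathcal{C}(G)$) directly as a $\Z_2$-sum rather than routing it through parity; neither changes the substance of the argument.
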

\begin{proof}
The result in \cite[Proposition 1.9.2]{diestel1997graph} gives that $S\in\mathcal{C}(G)$ if and only if each vertex in $S$ has even degree. Thus we must show that $S$ is a union of edge-disjoint cycles if an only if each vertex in $S$ has even degree. The forward direction is immediate since given a vertex $v$, each cycle containing $v$ contributes 2 to the degree of $v$. We show the backwards direction by induction on the number of edges. If $|S|=0$ and each vertex has even degree, then $S$ is the empty union of cycles. For $|S|>0$, there must exist a vertex $v$ of degree $\geq 2$. Consider a walk in $S$ starting at $v$. Pick an edge $e_0\in S$ containing $v=v_0$ and walk along $e_0$ to reach the vertex $v_1$. Since $v_1$ has positive even degree, there must be an edge $e_1\neq e_0$ containing $v_1$. Walk along this edge to arrive at a vertex $v_2$. Since $S$ is finite, this process must eventually reach a vertex $v_k$ for which $v_{k+1}=v_i$ for some $i<k-1$. Thus $S$ contains a cycle $C=e_i+e_{i+1}+\dots+e_k\in\mathcal{C}(G)$. Consider the subgraph $S^\prime =S+C$. Consider the degree of a vertex $v$ in $S^\prime$. If $v$ is incident with an edge in $C$, then the degree of $v$ in $S^\prime$ is two less than it is in $S$, and thus the degree of $v$ in $S^\prime$ is even. If $v$ is not incident with an edge in $C$, then the degree of $v$ in $S^\prime$ is the same as the degree of $v$ in $S$. We conclude that every vertex has even degree in $S^\prime$. By induction, $S^\prime=C_1+\dots +C_k$ where each $C_i$ is a cycle and $C_i\cap C_j=\varnothing$ for $i\neq j$. Therefore $S=C+C_1+\dots+C_k$ is an edge disjoint union of cycles.
\end{proof}

\begin{defn}[\cite{whitney1932logical}]
\label{brokencircdef}
Let $G=(V,E)$ be a graph, and let $\mc{O}$ be a fixed ordering $\mc{O}=(e_1,\dots, e_m)$ of the edge set $E$. A \textit{broken circuit} in $G$ is $C+e\in\mathcal{E}(G)$ (or $C\setminus\{e\}$ as sets) where $C\subseteq E$ is a cycle in $G$ and $e$ is the edge in $C$ with the largest label. Define $\NBC_{G,\mc{O}}$ (or just $\NBC$ when $G$ and $\mc{O}$ are clear from context) to be the set of all $S\subset E$ such that $S$ contains no broken circuits, and 
 let $\BCC_{G,\mc{O}}$ (or just $\BCC$) be its complement, that is,  $\BCC=2^E\setminus \NBC$. 
\end{defn}

Recall from Section \ref{sectionmorse} that $M^0$ denotes the set of $M$-critical vertices of a matching $M$ on a directed graph $D$. Given a graph $G=(V,E)$ and $S\subseteq E$, let $k(S)$ denote the number of connected components in the spanning subgraph $S$ (the graph with vertex set $V$ and edge set $S$), and let $\lambda(S)$ denote the set partition of $V$ into the connected components of $S$.
 
\begin{lemma} Let $G=(V,E)$ be a graph with a fixed ordering of its edges. The Hasse diagram of the Boolean lattice $2^E$ has an acyclic matching $\MBC$ in which each matched pair $S\lessdot S^\prime$ satisfies $k(S)=k(S^\prime)$, $\lambda(S)=\lambda(S^\prime)$, and $\MBC^0=\NBC$.
\label{bcmatching}
\end{lemma}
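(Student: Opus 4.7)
The plan is to define $\MBC$ via a canonical ``broken--circuit witness'' function $\phi : \BCC \to E$. For each $S \in \BCC$, set $\phi(S)$ to be the smallest edge $e$ of $G$ such that some cycle $C$ of $G$ satisfies $e_{\max}(C) = e$ and $C \setminus \{e\} \subseteq S$; this minimum exists precisely because $S \in \BCC$ contains some broken circuit. I would then pair each $S \in \BCC$ with $S^\ast := S \,\Delta\, \{\phi(S)\}$. Since $S$ and $S^\ast$ differ by exactly one edge, this pair forms a cover relation in $2^E$, so $\MBC$ is a set of edges of the Hasse diagram.

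To confirm $\MBC$ is a well-defined matching, I would verify $\phi(S^\ast) = \phi(S)$ so that the pairing is an involution on $\BCC$. Writing $e := \phi(S)$, the cycle $C$ witnessing $\phi(S) = e$ still witnesses $e$ as a candidate for $\phi(S^\ast)$, since $C \setminus \{e\}$ contains no occurrence of $e$ and symmetric difference with $\{e\}$ affects no other edge. Conversely, any cycle $C'$ with $e_{\max}(C') = e' < e$ and $C' \setminus \{e'\} \subseteq S^\ast$ cannot contain $e$ (else $e_{\max}(C') \geq e$), so $C' \setminus \{e'\} \subseteq S$, contradicting minimality of $\phi(S)$. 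This also gives $\MBC^0 = \NBC$ immediately since every $S \in \BCC$ is matched and no $S \in \NBC$ is. The preservation $k(S) = k(S^\ast)$ and $\lambda(S) = \lambda(S^\ast)$ then follows at once: $C \setminus \{e\}$ is a path in $G$ between the two endpoints of $e$, and this path lies in both $S$ and $S^\ast$, so toggling $e$ neither merges nor splits components.

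The main obstacle is acyclicity, which I would prove by tracking $\phi$ along hypothetical directed cycles in the modified Hasse diagram $G^{\MBC}$. First, any directed cycle must lie entirely in $\BCC$: both endpoints of every edge of $\MBC$ lie in $\BCC$, and $\BCC$ is upward closed in $2^E$, so once a directed cycle enters $\BCC$ it never leaves; meanwhile a cycle confined to $\NBC$ would consist solely of size-increasing edges $S \to S \cup \{f\}$, which cannot close. Next, I would show $\phi$ is non-increasing along directed edges inside $\BCC$: along a non-matching cover $S \to S \cup \{f\}$ the cycle witnessing $\phi(S)$ persists in $S \cup \{f\}$, giving $\phi(S \cup \{f\}) \leq \phi(S)$, while along a reversed matching edge the invariance above gives equality. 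Consequently $\phi$ is constant on any directed cycle, with some common value $e$. Finally, in such a cycle every non-matching addition $S \to S \cup \{f\}$ must have $f \neq e$, for otherwise $(S, S \cup \{e\})$ would be the matching edge at $S$ and hence reversed in $G^{\MBC}$; and every reversed matching step removes exactly $e$ by constancy of $\phi$. Thus $e$ can only be removed, never added, so no reversed matching step can actually occur, leaving only strictly size-increasing edges and contradicting closure of the cycle.
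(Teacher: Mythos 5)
Your proof is correct, and it takes a genuinely different route from the paper's. The paper defines the involution $i(S)=S\,\Delta\,\{e\}$ where $e$ is chosen \emph{maximally} among edges for which $S$ contains a broken circuit $C\setminus\{e\}$ with $e=e_{\max}(C)$; you choose $e$ \emph{minimally}. This change pays off twice. First, verifying $\phi(S^*)=\phi(S)$ becomes immediate for you, since any smaller witness in $S^*=S\,\Delta\,\{e\}$ must avoid $e$ and so is already a witness in $S$; the paper, working with the maximum, must rule out that passing to $S\cup\{e\}$ creates a broken circuit with a \emph{larger} distinguished edge, and for this it invokes the cycle-space fact (Lemma~\ref{symmetricdiff}) that the symmetric difference of cycles is an edge-disjoint union of cycles. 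Second, your acyclicity argument is direct: you show $\phi$ is non-increasing along arrows of the modified Hasse diagram within $\BCC$, hence constant on any directed cycle, and then a monovariant (the common edge $e$ can only be deleted, never inserted, along such a cycle) yields the contradiction. The paper instead constructs an explicit linear extension of $\BCC$ (size, then reverse-lexicographic), appeals to a criterion of Kozlov, and uses Lemma~\ref{symmetricdiff} again in the hardest case. Your route entirely sidesteps the cycle-space machinery and is cleaner. Two small points to flesh out in a final write-up: note explicitly that $S^*\in\BCC$ (because $C\setminus\{e\}\subseteq S^*$), and in the last step the reason $f\neq e$ along a non-matching cover $S\to S\cup\{f\}$ splits into the case $e\in S$ (where $f\neq e$ trivially since $f\notin S$) and the case $e\notin S$ (your case, where $(S,S\cup\{e\})$ is the matched, hence reversed, edge).
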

\begin{proof}
Given a function $f:S\to S$ on a set $S$, recall the \textit{orbit} of $x\in S$ is the set $\{f^n(x) \ | \ n\geq 0\}$. We construct a matching on the Hasse diagram of $2^E$ as follows. First we construct an involution $i:\BCC\to\BCC$ (that is, a function $i:\BCC\to\BCC$ such that $i^2$ is the identity function). Since $i$ is an involution, the orbits of $i$ form a collection of disjoint 2-element sets which covers $\BCC$. We define $i$ in such a way that for any $S\in\BCC$, $i(S)$ either covers $S$ or is covered by $S$ in $\BCC$. Therefore the orbits of $i$ form a matching on the Hasse diagram of $2^E$, which we denote $M_\BCC$, for which $M_\BCC^0=2^E\setminus \BCC=\NBC$. We now define the involution $i$.

Given $S\in\BCC$, pick $e\in E$ maximally among all broken circuits of the form $C+e$ in $S$, where $e\in C$. Define $i(S)=S+e$ (recall $S+e$ is either $S\cup\{e\}$ or $S\setminus\{e\}$ depending whether $e\in S$).
Let $C^\prime+f\subseteq i(S)$ be a broken circuit with $f$ chosen maximally among all such broken circuits. Suppose that $e\neq f$. Then $f>e$ since $C+e$ is a broken circuit in $i(S)$, and thus $f\notin C$ (else we contradict maximality of $e$). Similarly,we have $e\in C^\prime$ since otherwise $C^\prime+f$ is a broken circuit in $S$ again contradicting maximality of $e$. Lemma \ref{symmetricdiff} tells us that $C+C^\prime$ is a union of cycles, and we know that $e\notin C+C^\prime$ and $f\in C+C^\prime$ so $C+C^\prime+f\subseteq S$ contains a broken circuit missing the edge $f$, thus contradicting maximality of $e$ among broken circuits in $S$.

Let $\MBC$ denote the matching consisting of the collection of orbits of $i$. To see that $\MBC$ is acyclic, it suffices to give a linear extension of $\BCC$ in which $T$ follows $S$ for each pair $(S\lessdot T)$ in $\MBC$ (this is  \cite[Theorem 11.2]{kozlov2007combinatorial}). For each $e=(S\lessdot T)\in M$ let $u(e)=T$ and $d(e)=S$ ($u$ for `up' and $d$ for `down'). Let $u(M)=\{u(e) \ | \ e\in M\}$ and $d(M)=\{d(e) \ | \ e\in M\}$. Fix a total ordering on $d(M)$ by setting $U\leq V$ whenever $|U|<|V|$ or $|U|=|V|$ and $U$ is lexicographically larger than $V$ (that is, order ranks by reverse lexicographic order). Let $\MBC=\{(S_1,T_1),\dots (S_n,T_n)\}$ with the $S_i$ ordered as indicated above and $T_i=S_i+ e_i$ where $S_i$ has a distinguished broken circuit $C_i+e_i$. We will now show that $S_1,T_1,S_2,T_2,\dots,S_n,T_n$ is such a linear extension of the inclusion ordering on $\BCC$ (see for example Figure \ref{example}).
\begin{figure}[h]
    \centering
    \begin{tikzpicture}
        %\draw[help lines] (-3,0) grid (10,8);
        \node[scale=.6] at (6.8,5){\usebox\bcexample};
        \node at (1.8,5.6){$\BCC=$};
        \node at (-2.5,2.55){$G=$};
        \node at (-.5,2.5){\usebox\graph};
        \node at (4,.5){\usebox\linearextension};
    \end{tikzpicture}
    \caption{An example of the acyclic matching on $\BCC$ for the graph $G$ shown above, with matched edges shown in red. Ranks are ordered reverse lexicographically from top to bottom. The resulting linear extension is shown below.}
    \label{example}
\end{figure}
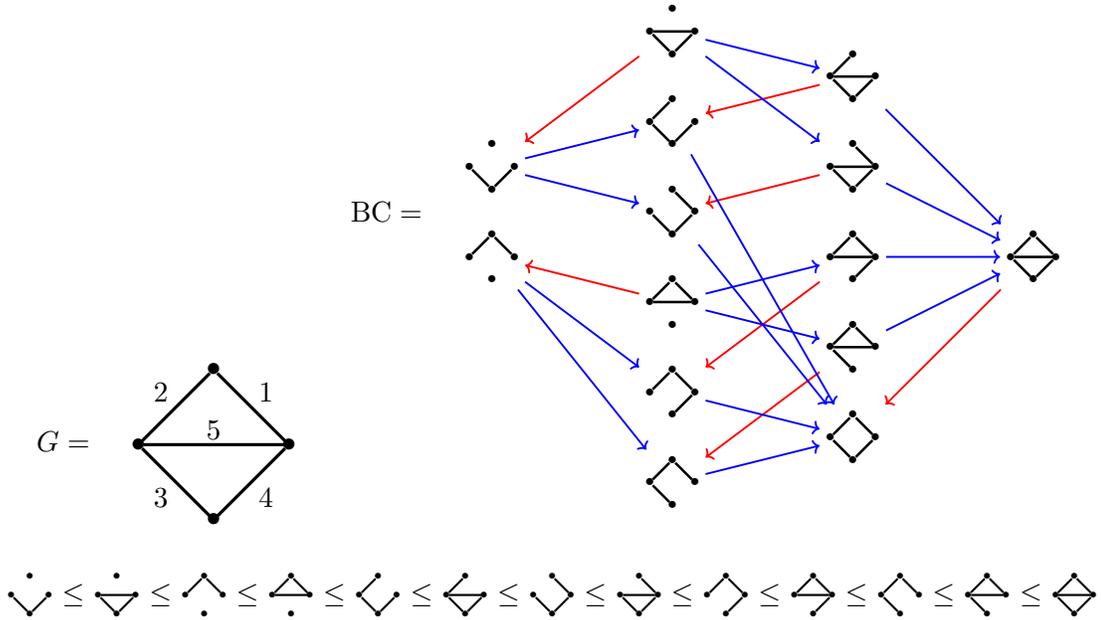
Suppose that $U,V\in\BCC$ and $U\subset V$. We now show by cases that $U$ comes before $V$ in the linear ordering $S_1,T_1,S_2,T_2,\dots,S_n,T_n$.
\setcounter{case}{0}
\begin{case}
$U,V\in d(M)$. Then $U$ comes before $V$ in the linear ordering because $|U|<|V|$. 
\end{case}

\begin{case} $U\in u(M), V\in d(M)$. Then $U=T_i$ and $V=S_j$ for some $i,j\in[n]$. Therefore $S_i\subsetneq T_i\subseteq S_j$ and thus $|S_i|<|S_j|$ so $S_i$ comes before $S_j$ in the linear extension by definition. Since $T_i$ comes directly after $S_i$, we conclude $U=T_i$ comes before $V=S_j$.
\end{case}

\begin{case} $U, V\in u(M)$. Let $U=T_i$ and $V=T_j$ for some $i,j\in[n]$. Then $|S_i|=|T_i|-1$ and $|S_j|=|T_j|-1$ and so $|S_i|<|S_j|$. We conclude that $S_i$ comes before $S_j$ in the linear extension and therefore $T_i$ comes before $T_j$. 
\end{case}

\begin{case}
$U\in d(M), V\in u(M)$. Let $U=S_i$ and $V=T_j$ for some $i,j\in[n]$.
If $S_i\subseteq S_j$ then $S_i$ comes weakly before $S_j$ which comes before $T_j$ in the linear extension, so we are done.
Similarly, if $|S_i|<|S_j|$ we are done so let us assume $|S_i|=|S_j|$. Thus there exists $x\in S_j$ with $x\notin S_i$, $T_j=S_i+x=S_j+e_j$. Notice also that $x,e_i,$ and $e_j$ are mutually distinct edges. See Figure \ref{schematic} for a schematic diagram of $S_i,S_j,$ and $T_j$. It suffices to show that $S_i$ is lexicographically larger than $S_j$, or equivalently, that $e_j>x$ in our fixed edge ordering.

Suppose towards a contradiction that $e_j<x$. Then $x\notin C_j$ (else we contradict maximality of $e_j$ in $C_j$) and therefore $C_j\subseteq S_i$ (since $e_j\in S_i$) so we can conclude that $e_i>e_j$ by maximality of $e_i$ in $S_i$. Thus we find that $e_i\notin C_j$ and furthermore $e_j\in C_i$ since otherwise $C_i+e_i\subseteq S_j$ (which would contradict maximality of $e_j$ in $S_j$).
Lemma \ref{symmetricdiff} tells us that $C_i+C_j$ is an edge-disjoint union of cycles, and we have $e_i\in C_i+C_j$ and $e_j\notin C_i+C_j$ and therefore $C_i+C_j+e_i\subseteq S_j$ contains a broken circuit missing the edge $e_i$ and therefore $e_j>e_i$ (contradicting our earlier finding that $e_i>e_j$). Therefore we conclude that $e_j>x$ and therefore $S_i$ is lexicographically larger than $S_j$. \qedhere
%
%Notice that $C_i+C_j-e_i-e_j\subseteq S_J$ and $e_j\in C_i\cap C_j$ and thus $(C_i\cup C_j-C_i\cap C_j)-e_i\subseteq S_j$. But $(C_i\cup C_j-C_i\cap C_j)$ is a symmetric difference of cycles and so is a union of cycles, one of which contains $e_i$. Therefore $S_j$ contains a broken circuit of the form $C-e_i$, contradicting maximality of $e_j$ in $S_j$. Thus we conclude that $x<e_j$. 
%
%If $x\notin C_j$ then $C_j\subseteq S_i$ so $e_i>e_j$ and therefore $e_i\notin C_j$. We must have $e_j\in C_i$ since otherwise $C_i-e_i\subseteq S_j$ contradicting maximality of $e_j$ in $S_j$. But then $(C_i+C_j-e_j)-e_i$ is a broken circuit in $S_j$ contradicting maximality of $e_j$. Therefore $x\in C_j$ and we conclude that $e_j>x$ and therefore $S_i=S_j+e_j-x$ is lexicographically larger than $S_j$. 
\end{case}
\end{proof}

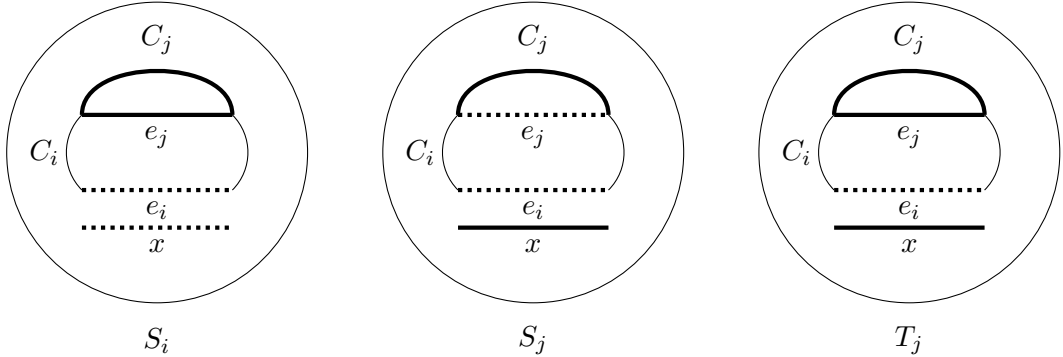
\begin{figure}
\centering
\begin{tikzpicture}
%\draw [help lines](0,0) grid (14,4);
\node at (2,-.5){$S_i$};
\node at (7,-.5){$S_j$};
\node at (12,-.5){$T_j$};
\node at (2,2){\usebox\si};
\node at (7,2){\usebox\sj};
\node at (12,2){\usebox\tj};
\end{tikzpicture}
\caption{A schematic for visualizing the subgraphs $S_i$, $S_j$, and $T_j$.}
\label{schematic}
\end{figure}

\section{A Broken Circuit Model for Chromatic Homology}
\label{brokenchromatic}

%In 1852, Francis Guthrie noticed that only four colors were needed to color the map of counties of England in such a way that no two adjacent counties get the same color, and went on to ask if this is a general phenomena of maps. This question became known as the ``four-color problem''. Due to the simplicity of its statement, and the fact that it remained unsolved for so long, this became one of the most famous open problems in mathematics. The four-color problem can be stated graph theoretically as follows.
A \textit{proper coloring} of a graph $G=(V,E)$ is a function $c:V\to[n]$ where $[n]=\{1,2,\dots,n\}$ such that if there is an edge from $x$ to $y$ in $G$ then $c(x)\neq c(y)$. A graph is \textit{$n$-colorable} if there exists a proper coloring $c:V\to[n]$. The famous four-color problem is equivalent to asking whether every planar graph is four-colorable. In 1912, Birkhoff \cite{birkhoff1912determinant} introduced the chromatic polynomial in an attempt to obtain an algebraic/analytic solution to the four-color problem. Although this attempt was unsuccessful, the idea contributed largely to the development of the field of algebraic graph theory. 

\begin{defn}[\cite{birkhoff1912determinant}]
Given a graph $G$, the \textit{chromatic polynomial} $\chi_G:\N\to\N$ is defined by $$\chi_G(x)=\#\{c:V\to[x]\ | \ c \ \text{is proper}\}.$$
\end{defn}

\begin{remark}
The four-color problem was answered in the affirmative in 1976 by  Appel and  Haken \cite{appel1977solution} essentially by using a computer to check that no possible counterexample can exist. A direct proof is still highly sought after. A possible new approach to the four-color problem is given by Kronheimer and Mrowka \cite{kronheimer2015tait} using instanton homology.
\end{remark}

It is not clear from the definition that $\chi_G$ is in fact a polynomial. The following result establishes this fact and gives a convenient way to calculate $\chi_G$. Recall, given a set $E$, $2^E$ denotes the collection of all subsets of $E$, partially ordered by containment.

\begin{lemma}[\cite{birkhoff1912determinant}]
For any graph $G=(V,E)$,
\[\chi_G(x)=\sum_{S\in 2^E}(-1)^{|S|}x^{k(S)}\]
where $k(S)$ is the number of connected components in $S$.
\end{lemma}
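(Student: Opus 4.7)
The plan is to prove this classical identity by inclusion--exclusion on the set of edges. Let $x \in \N$. Define the total set $\Omega = \{c : V \to [x]\}$ of all vertex colorings (not necessarily proper), so $|\Omega| = x^{|V|}$. For each edge $e = \{u,v\} \in E$, let $A_e \subseteq \Omega$ denote the ``bad event'' that $c(u) = c(v)$. A coloring $c$ is proper precisely when it avoids every $A_e$, so
\[
\chi_G(x) = \bigl|\Omega \setminus \bigcup_{e \in E} A_e\bigr|.
\]

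First I would apply inclusion--exclusion to the complement of $\bigcup_e A_e$ inside $\Omega$, giving
\[
\chi_G(x) = \sum_{S \subseteq E} (-1)^{|S|} \bigl|\textstyle\bigcap_{e \in S} A_e\bigr|,
\]
with the convention that the empty intersection equals $\Omega$. The main step is then to identify $\bigcap_{e \in S} A_e$: a coloring $c$ lies in this intersection if and only if $c(u) = c(v)$ for every edge $\{u,v\} \in S$, equivalently if and only if $c$ is constant on each connected component of the spanning subgraph $(V, S)$. The value of $c$ on each such component can be chosen freely from $[x]$, so
\[
\bigl|\textstyle\bigcap_{e \in S} A_e\bigr| = x^{k(S)}.
\]
Substituting yields the claimed formula.

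Neither step is a serious obstacle; the only subtlety is verifying that $c \mapsto (\text{colors on components})$ is a bijection between $\bigcap_{e\in S} A_e$ and $[x]^{k(S)}$, which is immediate once one notes that connectedness in $(V,S)$ is exactly the equivalence closure of the relation $u \sim v$ for $\{u,v\} \in S$. As a byproduct, the right-hand side is manifestly a polynomial in $x$ of degree $|V|$, justifying the name ``chromatic polynomial.'' This expression is exactly the rank alternator over the Boolean lattice $2^E$ discussed at the end of Section \ref{pocohom}, which sets up the categorification strategy used in the remainder of the paper.
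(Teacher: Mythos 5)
Your proof is correct and complete; it is the standard inclusion--exclusion derivation of the Whitney rank-generating expansion of $\chi_G$. The paper itself supplies no proof of this lemma (it is attributed to Birkhoff and stated without argument), so there is nothing internal to compare against, but your argument is exactly the expected one and both steps --- the inclusion--exclusion expansion over $2^E$ and the identification $\bigl|\bigcap_{e\in S}A_e\bigr|=x^{k(S)}$ via constancy on connected components --- are airtight.
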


In 1932, Whitney found the following connection between the chromatic polynomial of $G$ and broken circuits in $G$, resulting in a significant simplification in the computation of chromatic polynomials. 
 
\begin{theorem}[\cite{whitney1932logical}]\label{whitneytheorem}
For any graph $G$, and any fixed ordering of its edge set,
 $$\sum_{S\in\BCC}(-1)^{|S|}x^{k(S)}=0$$
and therefore
 \[\chi_G(x)=\sum_{S\in\NBC}(-1)^{|S|}x^{k(S)}.\]
\end{theorem}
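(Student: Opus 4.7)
The plan is to reduce the second equation to the first via Birkhoff's formula, and then to prove the first by invoking the acyclic matching $\MBC$ from Lemma \ref{bcmatching} to cancel terms of the alternating sum in pairs.

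First I would observe that $\NBC$ and $\BCC$ partition $2^E$, so Birkhoff's formula decomposes as
\[ \chi_G(x) = \sum_{S\in\NBC}(-1)^{|S|}x^{k(S)} + \sum_{S\in\BCC}(-1)^{|S|}x^{k(S)}. \]
Hence the second equality is an immediate corollary of the first, and the real work is to show that the sum over $\BCC$ vanishes.

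For this, I would invoke Lemma \ref{bcmatching}. The condition $\MBC^0 = \NBC$ says that the unmatched (critical) vertices of $\MBC$ are exactly the subgraphs in $\NBC$, so every $S \in \BCC$ is incident to a unique edge of $\MBC$, and $\BCC$ is partitioned into pairs $(S,S')$ with $S \lessdot S'$. The key property preserved by the matching is that $k(S)=k(S')$ on each matched pair, so
\[ (-1)^{|S|}x^{k(S)} + (-1)^{|S'|}x^{k(S')} = (-1)^{|S|}x^{k(S)}\bigl(1 + (-1)\bigr) = 0. \]
Summing over all matched pairs yields the desired vanishing.

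There is no substantive obstacle remaining at this stage: the essential content has already been packaged into Lemma \ref{bcmatching}, which supplies both the matching itself and the preservation of the connected component count on matched edges. The theorem is a clean decategorified consequence, and is in fact the shadow under Euler characteristic of the sort of Morse-theoretic collapse formalized by Theorem \ref{morsethinposet} that the paper will later apply at the chain-complex level.
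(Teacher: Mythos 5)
Your proof is correct and takes essentially the same route as the paper: both decompose Birkhoff's state sum over $2^E$ into $\NBC$ and $\BCC$ parts, then cancel the $\BCC$ terms in pairs using the matching of Lemma~\ref{bcmatching} and the fact that matched pairs satisfy $|S'| = |S| \pm 1$ and $k(S) = k(S')$. The paper's version is a single sentence; yours just spells out the pairwise cancellation a bit more explicitly.
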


\begin{proof}
This follows directly from Lemma \ref{bcmatching} since every contribution of the form $(-1)^{|S|}x^{k(S)}$ to $\chi_G(x)$ where $S\in\BCC$ is matched with another contribution $(-1)^{|S^\prime|}x^{k(S^\prime)}$ where $S^\prime\in\BCC$, $|S^\prime|=|S|\pm 1$ and $k(S)=k(S^\prime)$. Thus all  contributions from $\BCC$ cancel and we are left with only the contributions from $\NBC$. 
\end{proof}

\begin{comment}
\begin{remark}
The matching constructed in the proof of Lemma \ref{bcmatching} was known to Whitney. The new part of Lemma \ref{bcmatching} is that the matching is in fact acyclic.
\end{remark}
\end{comment}

Inspired by Khovanov link homology, Helme-Guizon and Rong \cite{helme2005categorification} constructed a homology theory which categorifies the chromatic polynomial $\chi_G(x)$. Next we describe their construction in the setting introduced in Section \ref{pocohom}. Let $\Rgmod$ denote the category whose objects are graded $R$-modules, and whose morphisms are degree-preserving  degree preserving homomorphisms of graded $R$-modules. In this section, $A$ will always denote a commutative graded $R$-algebra $A=\oplus_{k\in\Z}A_i$ with identity such that each $A_i$ is free of finite rank.

\begin{defn}\label{peredge}
Given a graph $G=(V,E)$, a commutative ring $R$, and an $R$-algebra $A$ with multiplication $m:A\otimes A\to A$, the $(G,A)$-\textit{chromatic functor}, $\FCH_{G,A}:2^E\to\Rgmod$ (or just $\FCH$ when $G$ and $A$ are understood) is defined as follows:

\begin{enumerate}
    \item $\FCH(S)=A^{\otimes k(S)}$ for $S\in 2^E$, where $k(S)$ denotes the number of connected components in $S$. 
    \item Given a cover relation $S\lessdot S+e$ in $2^E$, adding the edge $e$ to $S$ either joins two distinct connected components, or completes a cycle. In the former case, $\FCH(S\lessdot S+e)$ is defined by multiplying tensor factors of $A$ corresponding to the two joining connected components. In the latter case, $\FCH(S\lessdot S+e)$ is the identity map. 
\end{enumerate}
\end{defn}

\begin{lemma}
For any graph $G$, and any $R$-algebra $A$, Definition \ref{peredge}  determines a functor \[\FCH_{G,A}:2^E\to\Rgmod.\]
\end{lemma}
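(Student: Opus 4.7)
The plan is to invoke Theorem \ref{cwbalanced}(2): the Boolean lattice $2^E$ is a thin CW poset (it is the face poset of a simplex), so to promote the prescribed labeling of vertices and edges of its Hasse diagram to a functor, it suffices to verify that the labeling commutes on every diamond. Each diamond of $2^E$ has the form $\{S,\, S\cup e_1,\, S\cup e_2,\, S\cup\{e_1,e_2\}\}$ for some $S\subseteq E$ and distinct edges $e_1,e_2\notin S$, so the task reduces to checking that the two composites $\FCH(S)\to \FCH(S\cup\{e_1,e_2\})$ obtained by adding $e_1,e_2$ in the two possible orders agree.

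I would proceed by case analysis on whether each of $e_1,e_2$ is a \emph{merge} (joining two distinct components, with $\FCH$ acting by multiplication of the two corresponding tensor factors) or a \emph{cycle-completion} (in which case $\FCH$ acts by the identity). If both are cycle-completions, both composites are the identity. If exactly one is a merge, one checks that performing the merge on some pair of tensor factors commutes with the identity. When both $e_1$ and $e_2$ are merges, there are three subcases: (i) they merge pairs of components with disjoint index sets, in which case the two multiplications act on disjoint tensor factors and visibly commute; (ii) they share exactly one common component (a ``chained merge''), in which case the two composites amount to $m(m(a_i\otimes a_j)\otimes a_l)$ versus $m(a_i\otimes m(a_j\otimes a_l))$ and agree by associativity of $A$; (iii) they merge the same pair of components, in which case whichever edge is added second becomes a cycle-completion (since its endpoints already lie in the common component), so both composites again reduce to a single merge followed by the identity.

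The principal bookkeeping hurdle — and the only place where the hypotheses on $A$ beyond associativity enter — is fixing a consistent identification $\FCH(T)=A^{\otimes k(T)}$ as components merge and are re-indexed. I would fix a total order on $V$ and index tensor factors by the minimum vertex of each connected component, so that multiplication of the $i$-th and $j$-th factors deposits the product in the slot indexed by $\min(i,j)$. Commutativity of $A$ ensures that the canonical swap $A\otimes A\xrightarrow{\sim} A\otimes A$ intertwines the two possible orderings of a merge and therefore makes this choice of convention immaterial. With this setup, the chained-merge subcase reduces cleanly to associativity and the disjoint-merge subcase to the bifunctoriality of $\otimes$; no case requires anything beyond these two properties of $A$. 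Since all diamond relations are verified, Theorem \ref{cwbalanced}(2) concludes that $\FCH_{G,A}$ is a well-defined functor $2^E\to\Rgmod$.
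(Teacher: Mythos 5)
Your proof is correct and follows the same strategy as the paper: invoke Theorem \ref{functconst} to reduce to checking commutativity on diamonds, then do a case analysis on what the two edges do to connected components. Your case breakdown is a finer refinement of the paper's (which organizes by $k(S)-k(S+e+f)\in\{0,1,2\}$), and you are in fact somewhat more careful — you explicitly handle the disjoint-merge subcase (which the paper's Case~3 phrasing elides by speaking of ``the three components'') and the same-pair-merge subcase, and you spell out the indexing convention for tensor factors — but the underlying argument (associativity plus functoriality of $\otimes$, with trivial cases when a map is the identity) is the one in the paper.
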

\begin{proof}
Since the Boolean lattice is a CW poset (the face poset of the simplex), by Theorem \ref{functconst} it suffices to check that the ``per edge maps'' defined in Definition \ref{peredge} part 2 commute on diamonds. Given $S\in 2^E$, consider $e,f\in E\setminus S$ and the corresponding diamond $\{S,S+e,S+f,S+e+f\}$. 
\setcounter{case}{0}

\begin{case} If $k(S+e+f)=k(S)$, then all maps are identity maps, and the result is trivial.
\end{case}

\begin{case}If $k(S+e+f)=k(S)+1$, then both $\FCH(S+e\lessdot S+e+f)\circ\FCH(S\lessdot S+e)$ and $\FCH(S+f\lessdot S+e+f)\circ\FCH(S\lessdot S+f)$ consist of multiplying the two tensor factors corresponding to the two components of $S$ which are joined in $S+e+f$. 
\end{case}

\begin{case} If $k(S+e+f)=k(S)+2$, then both $\FCH(S+e\lessdot S+e+f)\circ\FCH(S\lessdot S+e)$ and $\FCH(S+f\lessdot S+e+f)\circ\FCH(S\lessdot S+f)$ consist of multiplying the three tensor factors corresponding to the three components of $S$ which are joined in $S+e+f$. These maps are equal by associativity in $A$. \qedhere
\end{case} 
\end{proof}

Given a graph $G=(V,E)$ where $E$ has a fixed ordering $E=\{e_1,\dots,e_m\}$, recall  the Boolean lattice $2^E$ has a balanced coloring $c:C(2^E)\to\{1,-1\}$ defined by $c(S\lessdot S+e_i)=(-1)^{\#\{j\in S \ | \ j<i\}}$. 
As per the construction in Section \ref{pocohom}, this data determines a chain complex $C^*(\FCH_{G,A},c)$ with homology $H^*(\FCH_{G,A})$ 
which we denote by $\HCH_A(G)$. The following theorem of Helme-Guizon and Rong shows that the homology $\HCH_A(G)$ categorifies the chromatic polynomial $\chi_G(x)$.

\begin{theorem}[\cite{helme2005categorification}]
Let $A$ be a commutative graded $R$-algebra with identity such that each $A_i$ is free of finite rank. For any graph $G$, under the identification 
$K_0(\mc{C}^b(\Rgmod))\cong \Z[q,q^{-1}]$ sending the cochain complex to its Euler characteristics $ [C]\mapsto \sum_{i\in\Z}(-1)^i\qrk C^i,$
we have
\[[\HCH_A(G)]=\chi_G(\qrk A).\]
\end{theorem}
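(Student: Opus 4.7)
The plan is to apply Theorem \ref{pococateg} to the functor $\FCH_{G,A}:2^E\to\Rgmod$ and then identify the resulting sum in the Grothendieck group with the evaluation of Birkhoff's state-sum formula for $\chi_G$ at $\qrk A$.

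First, I would note that the Boolean lattice $2^E$ is a thin poset (it is the face poset of the simplex, a CW poset) with rank function $\rk(S)=|S|$, and that it admits the balanced coloring $c$ described just before the theorem. Since $\Rgmod$ is a monoidal abelian category, Theorem \ref{pococateg} applies directly and gives, in $K_0(\mc{C}^b(\Rgmod))$,
\[
[\HCH_A(G)]=[H^*(\FCH_{G,A})]=\sum_{S\in 2^E}(-1)^{|S|}\,[\FCH_{G,A}(S)].
\]

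Next I would translate each term across the identification $K_0(\Rgmod)\cong\Z[q,q^{-1}]$ from Example \ref{gradedeuler}, under which $[M]\mapsto\qrk M$. By construction $\FCH_{G,A}(S)=A^{\otimes k(S)}$, and since the isomorphism $K_0(\Rgmod)\cong\Z[q,q^{-1}]$ is a ring isomorphism with respect to the tensor product (the multiplicative structure on both $K_0$ and on Laurent polynomials matches $\qrk(M\otimes N)=\qrk M\cdot\qrk N$ when the tensor factors are free of finite rank in each degree), we obtain
\[
[A^{\otimes k(S)}]\longmapsto(\qrk A)^{k(S)}.
\]
The hypothesis that each $A_i$ is free of finite rank is precisely what makes $\qrk$ well-defined and multiplicative on the relevant tensor products, so this step is routine but is the one that needs the finiteness assumption.

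Combining these two steps gives
\[
[\HCH_A(G)]=\sum_{S\in 2^E}(-1)^{|S|}(\qrk A)^{k(S)}.
\]
Finally I would invoke Birkhoff's state-sum formula $\chi_G(x)=\sum_{S\in 2^E}(-1)^{|S|}x^{k(S)}$ (the lemma stated just before Theorem \ref{whitneytheorem}) with $x=\qrk A$ to conclude $[\HCH_A(G)]=\chi_G(\qrk A)$. There is no real obstacle here: the main content is packaged in Theorem \ref{pococateg}, and the only thing to verify carefully is the compatibility of the Grothendieck-group isomorphism with tensor products, which follows from the freeness hypothesis on $A$.
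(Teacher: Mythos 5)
Your proposal is correct and follows exactly the argument the paper gives; the paper's proof is the single line ``Follows from Theorem \ref{pococateg},'' and your write-up simply unpacks that reference in the expected way (apply the rank-alternator formula, push through the ring isomorphism $K_0(\Rgmod)\cong\Z[q,q^{-1}]$ using $[A^{\otimes k(S)}]\mapsto(\qrk A)^{k(S)}$, and recognize Birkhoff's state sum).
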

\begin{proof}
Follows from Theorem \ref{pococateg}.
\end{proof}

\begin{remark}
Helme-Guizon and Rong \cite{helme2005categorification}, in their original construction of chromatic homology, mainly worked over $A_2=\Z[x]/(x^2)$  so that $q\rk A_2=1+q$, so one can recover the chromatic polynomial from $[\HCH_{A_2}(G)]$ with the substitution $q=x-1$. 
It turns out that $\HCH_{A_2}(G)$ is determined by the  $\chi_G(x)$, hence interesting applications might arise from working with other algebras, see \cite{helme2005graph,pabiniak2009first, sazdanovic2018patterns}. 
In particular, Pabiniak, Przytycki and Sazdanovic \cite{pabiniak2009first} show that already $\HCH_{A_3}(G)$ over algebra $A_3$ is a stronger graph invariant than $\chi_G(x)$. 
\end{remark}
Note that $\NBC\subseteq 2^E$ is a lower order ideal in $2^E$ and is thus also a thin poset.
Hence one can consider the restriction of $\FCH$ to $\NBC$ and the restriction of $c$ to $\NBC$, yielding a chain complex $C^*(\FCH|_\NBC,c|_\NBC)$ with homology $H^*(\FCH|_\NBC)$.

\begin{theorem}[Categorification of Whitney's Broken Circuit theorem]
\label{categcwhitney}
Let $A=\oplus_{k\in\Z}A_i$ be a commutative, graded $R$-algebra with identity such that each $A_i$ is free of finite rank. For any graph $G=(V,E)$,
%\RS{ANY? It needs to be at least commutative? Plus it is not mentioned in the statement whuch is strange} 
and any fixed ordering of the edge set $E$,
\begin{equation}
\label{WBCCat}
H^*(\FCH)\cong H^*(\FCH|_\NBC).
\end{equation}
\end{theorem}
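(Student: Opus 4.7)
The plan is to recognize that Theorem \ref{categcwhitney} follows essentially directly from the combination of Lemma \ref{bcmatching} and Theorem \ref{morsethinposet}, once we verify that the matching $\MBC$ satisfies the isomorphism hypothesis of the latter with respect to the functor $\FCH$.

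First I would observe that $\NBC$ is a lower order ideal in the Boolean lattice $2^E$: if $S \in \NBC$ and $T \subseteq S$, then any broken circuit in $T$ would also be a broken circuit in $S$, contradiction. Consequently its complement $\BCC = 2^E \setminus \NBC$ is an upper order ideal, and it is this upper order ideal that will play the role of $\mc{I}$ in Theorem \ref{morsethinposet}. Restricting $\MBC$ from Lemma \ref{bcmatching} to $\BCC$ gives an acyclic matching on $\BCC$ which is complete there, since by that lemma $\MBC^0 = \NBC$ so no vertex of $\BCC$ is critical.

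The heart of the argument is checking that for every matched cover relation $S \lessdot S'$ in $\MBC$, the morphism $\FCH(S \lessdot S')$ is an isomorphism. By Lemma \ref{bcmatching}, any such matched pair satisfies $k(S) = k(S')$ and $\lambda(S) = \lambda(S')$. If $S' = S + e$, this means adding the edge $e$ to $S$ does not merge two connected components; by the dichotomy in Definition \ref{peredge}(2), $e$ must therefore complete a cycle, and $\FCH(S \lessdot S')$ is the identity map on $A^{\otimes k(S)}$, which is certainly an isomorphism.

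With these ingredients in place, Theorem \ref{morsethinposet} applied to $P = 2^E$, the upper order ideal $\mc{I} = \BCC$, the functor $\FCH$, and the matching $\MBC|_\BCC$ yields
\[
H^*(\FCH) \cong H^*(\FCH|_{2^E \setminus \BCC}) = H^*(\FCH|_\NBC),
\]
which is exactly \eqref{WBCCat}. There is no serious obstacle here; the only point requiring care is making sure the orientation (upper vs.\ lower order ideal) is the correct one so that the quotient/subcomplex interpretation of Theorem \ref{morsethinposet} applies, and that the matching property provided by Lemma \ref{bcmatching} precisely matches the Morse condition (isomorphism on each matched edge) needed to invoke the theorem.
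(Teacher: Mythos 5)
Your proof is correct and follows exactly the same route as the paper's: invoke Lemma \ref{bcmatching} to get the acyclic matching on $\BCC$, observe that matched cover relations preserve $\lambda(S)$ so the per-edge map is the identity (hence an isomorphism), and conclude via Theorem \ref{morsethinposet}. The paper states this in two sentences; you have merely spelled out the same reasoning in more detail, including the verification that $\BCC$ is an upper order ideal, which the paper leaves implicit.
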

\begin{proof}
Recall that for a cover relation $S\lessdot S+e$ for which $e$ completes a cycle in $S$, the map $\FCH(S\lessdot S+e)$ is an isomorphism. The result follows from Theorem \ref{morsethinposet} and Lemma \ref{bcmatching}.
\end{proof}
\begin{remark}
As in Example \ref{gradedeuler}, taking the (graded) Euler characteristic of both sides  of the above isomorphism recovers the statement of Theorem \ref{whitneytheorem}. Therefore Theorem \ref{categcwhitney} can be viewed as a categorification of Whitney's Broken Circuit Theorem.
\end{remark}
Theorem \ref{WBCCat} allows one to replace the complex $C^*(\FCH,c)$ by $C^*(\FCH|_\NBC,c|_\NBC)$, thus giving homological bounds for chromatic homology as an immediate corollary. By construction of the chromatic chain complex, we have $C^*(\FCH_{G_1\amalg G_2})\cong C^*(\FCH_{G_1})\otimes C^*(\FCH_{G_2})$, and therefore we may restrict attention to connected graphs, without loss of generality.
\begin{corollary} 
For any connected graph $G$ with $n$ vertices, $\HCH_A(G)$ is supported in homological gradings $0\leq i\leq n-1$.
    \label{support}
\end{corollary}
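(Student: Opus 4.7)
The plan is to use Theorem \ref{categcwhitney} to replace $\HCH_A(G) = H^*(\FCH)$ with $H^*(\FCH|_\NBC)$, and then show that the restricted chain complex $C^*(\FCH|_\NBC, c|_\NBC)$ is concentrated in homological degrees $0$ through $n-1$. The key combinatorial observation is that every element of $\NBC$ must be a forest: if $S \subseteq E$ contains a cycle $C$, then letting $e$ be the maximal edge in $C$, the subset $C\setminus\{e\}$ is a broken circuit contained in $S$, so $S \notin \NBC$.

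Since $G$ is connected with $n$ vertices, a spanning forest of $G$ has at most $n-1$ edges, so every $S \in \NBC$ satisfies $|S| \leq n-1$. Recalling that the rank of $S \in 2^E$ in the Boolean lattice is $|S|$, this means the chain group
\[
C^k(\FCH|_\NBC, c|_\NBC) \;=\; \bigoplus_{\substack{S \in \NBC \\ |S|=k}} \FCH(S)
\]
is zero for $k < 0$ and for $k > n-1$. Consequently $H^k(\FCH|_\NBC) = 0$ outside the range $0 \leq k \leq n-1$, and the isomorphism of Theorem \ref{categcwhitney} transfers this vanishing to $\HCH_A(G)$.

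There is no real obstacle here — the whole corollary is immediate once one notes that broken-circuit-free means acyclic, and this in turn follows from the definition of broken circuit together with the trivial fact that a cycle $C$ contains $C \setminus \{e\}$ for its largest edge $e$. The only mild subtlety is that Theorem \ref{categcwhitney} is formulated for arbitrary graphs while the corollary assumes connectedness; connectedness is used solely to bound the size of a spanning forest by $n-1$, and the remark preceding the corollary (that the chromatic complex is multiplicative under disjoint unions) explains why restricting to the connected case is harmless.
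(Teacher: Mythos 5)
Your proof is correct and follows essentially the same approach as the paper's: invoke Theorem \ref{categcwhitney} to pass to $H^*(\FCH|_\NBC)$, then note that any $S\in\NBC$ is acyclic (a cycle would contain the broken circuit obtained by deleting its largest edge), hence a forest on $n$ vertices with at most $n-1$ edges. The paper phrases this contrapositively (a spanning subgraph with $\geq n$ edges contains a cycle, hence a broken circuit), but the content is identical.
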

\begin{proof} 
Proof follows from Theorem \ref{categcwhitney} along with the following fact. Any tree on $n$ vertices has $n-1$ edges, thus any spanning subgraph of $G$ with $n$ or more edges must contain a cycle, and consequently a broken circuit. 
\end{proof}

The bounds in Corollary \ref{support}  are not the best  available. The deletion-contraction long exact sequence in chromatic homology over any $A_m$ was used to show that $\HCH_{A_m}(G)$ is supported in homological gradings $(i,j)$ with $0\leq i\leq n-2$ \cite{helme2005torsion}. Furthermore, the number of vertices minus the number of blocks of a graph provides a lower bound for homological span of chromatic homology over any $A_m$ \cite[Theorem 44]{sazdanovic2018patterns}.
Theorem \ref{categcwhitney} does however give a more constructive explanation for these homological bounds. Furthermore, Theorem \ref{categcwhitney} explicitly indicates a much quicker way to implement the calculation of chromatic homology in general. That is, in the construction of the chromatic chain complex, remove all direct summands corresponding to spanning subgraphs which contain broken circuits, and all per-edge maps into and out of such summands. For complete graphs, this reduces from a computation time which grows quadratically in the number of vertices to one which grows linearly. Finally, the broken circuit model gives a more general approach for finding such homological bounds. In particular, it can also be used for chromatic symmetric homology (see Section \ref{brokensymmetric}), where there is no long exact sequence analogous to the one in the case of chromatic homology.

\section{A Broken Circuit Model for Chromatic Symmetric Homology}
\label{brokensymmetric}

 The {\textit chromatic symmetric function} $X_G(x_1,x_2,\ldots)$ was introduced in 1995 by Stanley \cite{stanley1995symmetric}. Chromatic symmetric function is a multi-variable generalization of the chromatic polynomial $\chi_G(x)$ in the sense that \[X_G(1,\ldots,1,0,\ldots) = \chi_G(k),\] where we have substituted 1 for the first $k$ coordinates $x_1,\dots x_k$, and 0 for the rest. The chromatic symmetric function for $G=(V,E)$ is defined as
\[X_G(\mathbf{x})=\sum_c x_{c(v_1)}\dots x_{c(v_n)},\]
where the sum is over all proper colorings $c:V(G)\to\N$ of $G$.
The chromatic symmetric function generalizes many properties of the chromatic polynomial, but in general, $X_G(\mathbf{x})$ is a stronger invariant than $\chi_G(x)$.  Recall the power sum symmetric functions $p_k(\mathbf{x})=\sum_{i=0}^\infty x_i^k$. Given any partition $\lambda=(\lambda_1,\dots,\lambda_n)$ we define $p_\lambda=p_{\lambda_1}\dots p_{\lambda_n}$. Any edge subset $T\subseteq E$ determines a partition $\lambda(T)$ of $n$, defined by the number of vertices in each connected component of $S$. The chromatic symmetric function, like the chromatic polynomial has a state sum formula:
\begin{equation}
    X_G=\sum_{T\in 2^E}(-1)^{|T|}p_{\lambda(T)}.
    \label{csdefn}
\end{equation}
Similar to the case for the chromatic polynomial, the chromatic symmetric function also has a significant simplification in terms of broken circuits.
\begin{theorem}[\cite{stanley1995symmetric}] 
\label{csbroken}
\label{csbrokenthm}
For any graph $G$, and any fixed ordering of the edge set, 
$$\sum_{T\in \BCC}(-1)^{|T|}p_{\lambda(T)}=0,$$
and therefore
    \[X_G(\mathbf{x})=\sum_{T\in \NBC}(-1)^{|T|}p_{\lambda(T)}.\]
\end{theorem}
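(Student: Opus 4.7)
The plan is to mirror the proof of Theorem \ref{whitneytheorem} exactly, replacing the role of $x^{k(S)}$ by $p_{\lambda(S)}$. First, I would split the state-sum formula (\ref{csdefn}) as
\[
X_G = \sum_{T\in\NBC}(-1)^{|T|}p_{\lambda(T)} + \sum_{T\in\BCC}(-1)^{|T|}p_{\lambda(T)},
\]
so that the second displayed equation in the theorem follows immediately from the first. Thus the real content is the vanishing identity $\sum_{T\in\BCC}(-1)^{|T|}p_{\lambda(T)}=0$.

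Next, I would invoke Lemma \ref{bcmatching}, which produces an (acyclic) matching $\MBC$ on the Hasse diagram of $2^E$ whose critical set is exactly $\NBC$; equivalently, $\MBC$ partitions $\BCC$ into disjoint matched pairs $S\lessdot S'$. The lemma guarantees two crucial properties for each matched pair: $\lambda(S)=\lambda(S')$ (so the power-sum monomial $p_{\lambda(\cdot)}$ takes the same value at $S$ and $S'$), and $|S'|=|S|+1$ (so the signs $(-1)^{|S|}$ and $(-1)^{|S'|}$ are opposite). Consequently, the contributions of $S$ and $S'$ to $\sum_{T\in\BCC}(-1)^{|T|}p_{\lambda(T)}$ cancel. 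Summing over all matched pairs yields zero, as desired.

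There is essentially no obstacle here: Lemma \ref{bcmatching} was designed precisely to support both the chromatic and the chromatic-symmetric broken circuit statements, and in particular it records the stronger invariance of the full set partition $\lambda(S)$ (not merely the component count $k(S)$). I would conclude by remarking that acyclicity of $\MBC$ is not needed at the polynomial level; it is only the existence of the pairing together with the invariance of $\lambda$ that matters here. Acyclicity is what powers the categorified version in the next section, via Theorem \ref{morsethinposet}.
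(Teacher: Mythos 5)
Your proposal is correct and is essentially the same argument the paper has in mind: the paper's proof reads ``This follows directly from Lemma \ref{bcmatching},'' leaving the reader to replicate the Theorem \ref{whitneytheorem} argument with $p_{\lambda(T)}$ in place of $x^{k(T)}$, which is exactly what you have done. Your closing remark that $\lambda$-invariance (rather than mere $k$-invariance) is the relevant conclusion of Lemma \ref{bcmatching}, and that acyclicity is only needed for the categorified statement, is a correct and useful clarification of why the lemma was stated in its given strength.
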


\begin{proof}
This follows directly from Lemma \ref{bcmatching}.
\end{proof}
With equation \ref{csdefn} at hand, Sazdanovi\'{c} and Yip \cite{sazdanovic2018categorification} categorify $X_G$ in a way analogous to Helme-Guizon and Rong's categorification of $\chi_G(x)$. We now recall their construction, but state it in the language of Section \ref{pocohom}.
%Let $\{e_1,\dots, e_n\}$ denote the standard basis for $\R^n$ and let $\alpha_i=e_i-e_{i-1}$. 
Let $S^\lambda$ denote the irreducible $\C[S_n]$-modules indexed by $\lambda\vdash n$. 
%In particular, $S^{(n-1,1)}$ is the standard $n-1$ dimensional $\C[S_n]$-module with basis $\alpha_1,\dots,\alpha_{n-1}$. 
For $n\in\N$, let $\mc{L}_n$ denote the graded $\C[S_n]$-module
\begin{equation}
    \mathcal{L}_n=\wedge^*S^{(n-1,1)}=\bigoplus_{k=0}^{n-1}\wedge^kS^{(n-1,1)}=\bigoplus_{k=0}^{n-1}S^{(n-k,1^k)}\{k\}
\end{equation}
where $\{k\}$ denotes a grading shift up by $k$.
%For $n=1$, $\mc{L}_n=S^{(n)}$ is the trivial representation. 
Let $T\subseteq E(G)$ be a spanning subgraph of $G$ with $r$ connected components, whose vertex sets we denote by $B_1,\dots, B_r$. Let $b_i$ denote the number of vertices in $B_i$. The sets $B_1,\dots, B_r$ of vertices partition $V(G)$ so $S_{B_1}\times\dots\times S_{B_r}$ is a Young subgroup of $S_{V(G)}\cong S_n$. Define the graded $\C[S_n]$-module
\begin{equation}
    \mc{M}_T=\Ind_{S_{B_1}\times\dots\times S_{B_r}}^{S_{V(G)}}(\mc{L}_{b_1}\otimes\dots\otimes \mc{L}_{b_r})
\end{equation}
For each cover relation $T\lessdot T+e$ in the Boolean lattice $2^E$, Sazdanovi\'{c} and Yip define a map $d_{T\lessdot T+e}:\mc{M}_T\to\mc{M}_{T+e}$. If adding $e$ completes a cycle when added to $T$, $d_{T\lessdot T+e}$ is the identity map. In the case that $e$ does not complete a cycle, we refer the reader to \cite{sazdanovic2018categorification} for details. For the purposes of this section, the details of the per-edge maps in general are not needed. 

\begin{defn}
Given a graph $G=(V,E)$ with $|V|=n$, define the \textit{chromatic symmetric functor} $\FCS_G:2^E\to\CSNgmod$ by
\begin{enumerate}
    \item Given $T\subseteq E$, $\FCS(T)=\mathcal{M}_T$,
    \item Given a cover relation $T\lessdot T+e$, define $\FCS(T\lessdot T+e)=d_{T\lessdot T+e}$.
\end{enumerate}
\end{defn}
As per the construction in Section \ref{pocohom}, for any graph $G=(V,E)$, the data $(2^E,\FCS_G,c)$ defines a cochain complex $C^*(\FCS_G,c)$ in $\mc{C}^b(\CSNgmod)$, with homology $H^*(\FCS_G)$. We will refer to $H^*(\FCS_G)$ as the \textit{chromatic symmetric homology} of $G$, and denote it by $\HCS(G)$.
Let $R_n$ denote the Grothendieck group of the category $\CSNgmod$. 
That is, $R_n$ is the free abelian group on the isomorphism classes $[S^\lambda]$ of Specht modules indexed by all partitions $\lambda$ of $n$. 
Define the graded ring  $R=\oplus_{n\geq 0} R_n$ with multiplication defined as follows. 
Given $N\in \CSNgmod$ and $M\in\CSMgmod$ define  $[M]\cdot[N]=[\Ind_{S_n\times S_m}^{S_{n+m}} M\otimes N]$. 
One can also think of $R$ as the Grothendieck ring of the monoidal category of all symmetric group representations. Let $\Lambda_\C$ denote the ring of symmetric functions over $\C$. 

\begin{theorem}[{\cite[Section 7.3, Theorem 1]{fulton1997young}}]
The homomorphism $\ch:R\to\Lambda_\C$ of graded rings defined by sending the Specht modules to the Schur functions
$[S^\lambda]\mapsto s_\lambda$
is an isomorphism.
\label{symgroth}
\end{theorem}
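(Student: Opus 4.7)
The plan is to exhibit an explicit inverse to the claimed map, namely the Frobenius characteristic map. Define $\ch : R \to \Lambda_\C$ on an $S_n$-module $M$ by
\[
\ch([M]) \;=\; \frac{1}{n!}\sum_{\sigma\in S_n}\chi_M(\sigma)\,p_{\lambda(\sigma)},
\]
where $\chi_M$ is the character of $M$ and $\lambda(\sigma)$ is the cycle type of $\sigma$. \textbf{First}, I would check that $\ch$ is additive on short exact sequences (characters are), so it descends to a well-defined group homomorphism $R_n \to \Lambda_\C$, concentrated in degree $n$. \textbf{Second}, I would verify multiplicativity: for $M\in\CSNgmod$ and $N\in\CSMgmod$, Frobenius reciprocity gives a formula for the character of $\Ind_{S_n\times S_m}^{S_{n+m}}(M\otimes N)$, and the observation that the cycle type of a permutation in the Young subgroup $S_n\times S_m$ is the concatenation of the cycle types of its two factors makes the corresponding sum over $S_{n+m}$ factor as a product of the sums over $S_n$ and $S_m$. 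This shows $\ch$ is a homomorphism of graded rings.

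\textbf{Third}, and the crux of the argument, I would show $\ch([S^\lambda]) = s_\lambda$. The strategy has three substeps: (i) Compute $\ch$ of the trivial representation $\mathbf{1}_{S_n}$ directly from the definition, recognizing the resulting sum $\sum_{\lambda\vdash n} p_\lambda/z_\lambda$ as the complete homogeneous symmetric function $h_n$. By multiplicativity, this immediately gives $\ch\bigl(\Ind_{S_\mu}^{S_n}\mathbf{1}\bigr)=h_\mu$ for every $\mu\vdash n$. (ii) Invoke Young's rule, $\Ind_{S_\mu}^{S_n}\mathbf{1} \cong \bigoplus_\lambda K_{\lambda,\mu}\,S^\lambda$, where $K_{\lambda,\mu}$ is the Kostka number. (iii) Compare with the classical identity $h_\mu=\sum_\lambda K_{\lambda,\mu}s_\lambda$. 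Since the Kostka matrix is unitriangular with respect to dominance order on partitions, it is invertible over $\Z$, and the two triangular systems force $\ch([S^\lambda])=s_\lambda$ for every $\lambda$.

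\textbf{Finally}, since $\{[S^\lambda]\}_{\lambda\vdash n}$ is a $\Z$-basis for $R_n$ (the $S^\lambda$ are a complete set of irreducibles for $\C[S_n]$) and $\{s_\lambda\}_{\lambda\vdash n}$ is a $\Z$-basis for the degree-$n$ part of $\Lambda_\C$, the map $\ch$ sends a basis bijectively to a basis in each degree and is therefore an isomorphism of graded rings.

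The main obstacle is step three, specifically Young's rule, which packages a substantial portion of the classical combinatorial representation theory of the symmetric group; the other steps are essentially formal consequences of the definition of $\ch$ and Frobenius reciprocity. An alternative route that avoids Young's rule is to prove $\ch([S^\lambda])=s_\lambda$ via the Frobenius character formula, expressing $\chi_{S^\lambda}(\sigma)$ as a coefficient extraction from a Vandermonde-type product; this exchanges one nontrivial combinatorial input for another, so the essential difficulty is unavoidable.
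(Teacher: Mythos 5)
The paper does not prove this statement: it is imported verbatim from Fulton's \emph{Young Tableaux} (Section 7.3, Theorem 1), so there is no internal proof to compare against. Your argument via the Frobenius characteristic map, pinning down $\ch([S^\lambda])=s_\lambda$ through $\ch(\Ind_{S_\mu}^{S_n}\mathbf{1})=h_\mu$, Young's rule, $h_\mu=\sum_\lambda K_{\lambda,\mu}s_\lambda$, and the unitriangularity of the Kostka matrix, is correct and is essentially the classical proof given in Fulton and in most standard references; it is the same approach the paper is implicitly relying on by citing that source.
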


From now on, we identify $R$ with $\Lambda_\C$ via the isomorphism $\ch$ from Theorem \ref{symgroth}. 

\begin{theorem}[\cite{sazdanovic2018categorification}]
For any graph $G$, $[\HCS(G)]=X_G(\mathbf{x})$
in $K_0(\mc{C}(\CSNgmod))$.
\end{theorem}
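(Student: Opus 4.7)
The plan is to follow the same strategy as used for chromatic homology in Theorem \ref{categcwhitney}. First, apply Theorem \ref{pococateg} to the functor $\FCS_G:2^E\to\CSNgmod$ to obtain
\[[\HCS(G)] = \sum_{T\in 2^E}(-1)^{|T|}[\mc{M}_T]\]
in $K_0(\mc{C}(\CSNgmod))$. Comparing with the state-sum formula in Equation \ref{csdefn}, $X_G = \sum_{T\in 2^E}(-1)^{|T|}p_{\lambda(T)}$, this reduces the theorem to the single identity $[\mc{M}_T] = p_{\lambda(T)}$ in $R\cong\Lambda_\C$ (via the characteristic isomorphism $\ch$ of Theorem \ref{symgroth}), for every $T\in 2^E$.

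Next, fix $T\in 2^E$ with connected components on vertex sets $B_1,\ldots,B_r$ of sizes $b_1,\ldots,b_r$, so $\lambda(T) = (b_1,\ldots,b_r)$. By construction, $\mc{M}_T = \Ind_{S_{B_1}\times\cdots\times S_{B_r}}^{S_V}(\mc{L}_{b_1}\otimes\cdots\otimes\mc{L}_{b_r})$. Since $\ch$ is a ring isomorphism intertwining the induction product on $R$ with the ordinary product on $\Lambda_\C$, we get $[\mc{M}_T] = [\mc{L}_{b_1}]\cdots[\mc{L}_{b_r}]$. Because $p_{\lambda(T)} = p_{b_1}\cdots p_{b_r}$, everything reduces to the single-factor identity $[\mc{L}_n] = p_n$ for each $n\geq 1$.

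To establish $[\mc{L}_n] = p_n$, I would use the given decomposition $\mc{L}_n = \bigoplus_{k=0}^{n-1} S^{(n-k,1^k)}\{k\}$. Under $\ch$, each Specht summand $S^{(n-k,1^k)}$ contributes the hook Schur function $s_{(n-k,1^k)}$, with the grading shifts $\{k\}$ contributing signs $(-1)^k$ in the Grothendieck class (this is how the internal grading on $\mc{L}_n$ is assembled into the homological grading of $C^*(\FCS_G,c)$ in the construction of \cite{sazdanovic2018categorification}). The resulting equality
\[[\mc{L}_n] = \sum_{k=0}^{n-1}(-1)^k s_{(n-k,1^k)} = p_n\]
is the classical hook expansion of the power-sum symmetric function, a special case of the Murnaghan--Nakayama rule applied to $p_n$ in which only hook-shaped partitions of $n$ admit a single rim hook, each contributing coefficient $\pm 1$.

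The main obstacle is not combinatorial but bookkeeping: one must align the internal $\{k\}$-grading on $\mc{L}_n$ with the homological grading of the total complex so that the required sign $(-1)^k$ genuinely appears when passing to the Grothendieck class of $\mc{L}_n$. Once this is unwound from the definition of $\FCS_G$ and $C^*(\FCS_G,c)$ in \cite{sazdanovic2018categorification}, the remainder of the argument is a standard identity in the representation theory of symmetric groups combined with the application of Theorem \ref{pococateg}.
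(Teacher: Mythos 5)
Your proof is correct and follows essentially the same route as the paper: the paper's terse proof simply asserts the key identity $p_n=\sum_{i=0}^{n-1}(-1)^i[S^{(n-i,1^i)}]$ (your Murnaghan--Nakayama hook expansion), deduces $[\mc{M}_T]=p_{\lambda(T)}$, and invokes Equation \ref{csdefn} and the construction in Section \ref{pocohom} for the rest. You have simply unpacked these steps explicitly -- applying Theorem \ref{pococateg} and using multiplicativity of $\ch$ to reduce to $[\mc{L}_n]=p_n$ -- and your observation that the $(-1)^k$ must be seen to arise from the internal $\{k\}$-grading on $\mc{L}_n$ feeding into the homological grading (rather than a $q$-variable) is a genuine subtlety that the paper's proof leaves implicit.
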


\begin{proof}
By the identification allowed by Theorem \ref{symgroth},  $p_n=\sum_{i=0}^{n-1}(-1)^i[S^{(n-i,1^i)}]$ and therefore, for each $T\subseteq E$, we have $[\mc{M}_T]=p_{\lambda(T)}$, and the rest of the proof follows immediately by construction, considering equation \ref{csdefn}.
\end{proof}

One can consider the restriction of $\FCS$ to $\NBC$ and the restriction of $c$ to $\NBC$, yielding a chain complex $C^*(\FCS|_\NBC,c|_\NBC)$ with homology $H^*(\FCS|_\NBC)$.

\begin{theorem}\label{categcswhitney}
For any graph $G=(V,E)$, and any fixed ordering of the edge set $E$,
\[H^*(\FCS)\cong H^*(\FCS|_\NBC).\]
\end{theorem}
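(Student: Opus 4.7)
The plan is to mirror the proof of Theorem \ref{categcwhitney} almost verbatim, since the only ingredients used there were (i) the fact that the per-edge map associated to adding a cycle-completing edge is an isomorphism, and (ii) the acyclic matching from Lemma \ref{bcmatching}. Both of these features transfer to the chromatic symmetric setting.

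First I would verify the key local property of $\FCS$: whenever $T\lessdot T+e$ is a cover relation in $2^E$ for which $e$ completes a cycle in $T$, the map $\FCS(T\lessdot T+e)=d_{T\lessdot T+e}$ is an isomorphism. This is immediate from the definition of the chromatic symmetric functor, where $d_{T\lessdot T+e}$ is declared to be the identity $\mc{M}_T\to\mc{M}_{T+e}$ in precisely this case (and indeed $\lambda(T)=\lambda(T+e)$ when $e$ completes a cycle, so the target and source literally coincide).

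Next I would bring in Lemma \ref{bcmatching}, which furnishes an acyclic matching $\MBC$ on the Hasse diagram of $2^E$ with $\MBC^0=\NBC$. The key observation is that every matched pair $S\lessdot S'=S+e$ in $\MBC$ satisfies $k(S)=k(S')$ and $\lambda(S)=\lambda(S')$, so by definition the added edge $e$ must complete a cycle in $S$. Consequently $\FCS(S\lessdot S')$ is an isomorphism for every edge in $\MBC$. Since $\BCC=2^E\setminus\NBC$ is the upper order ideal on which the matching lives, we are in precisely the hypothesis situation of Theorem \ref{morsethinposet} (with $\mc{I}=\BCC$).

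Applying Theorem \ref{morsethinposet} then yields $H^*(\FCS)\cong H^*(\FCS|_{2^E\setminus\BCC})=H^*(\FCS|_\NBC)$, which is the desired conclusion. I do not anticipate a genuine obstacle here: both Lemma \ref{bcmatching} and Theorem \ref{morsethinposet} have already done the substantive combinatorial and homological-algebra work, and the only verification specific to the symmetric setting is the triviality that cycle-completing edges produce identity maps in $\FCS$ (built into the definition). The proof is essentially a one-line citation once these pieces are assembled, and a single remark afterwards can note that taking graded Frobenius characters on both sides recovers Theorem \ref{csbrokenthm}, giving the sense in which Theorem \ref{categcswhitney} is a categorification of Whitney's broken circuit theorem for the chromatic symmetric function.
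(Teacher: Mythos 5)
Your proof is correct and follows the paper's argument essentially verbatim: both identify that matched pairs in $\MBC$ correspond to cycle-completing edges (hence identity maps in $\FCS$), and both then apply Theorem \ref{morsethinposet} with Lemma \ref{bcmatching} to the upper order ideal $\BCC$. The additional justification you give, that $\lambda(S)=\lambda(S')$ forces the matched edge to complete a cycle, is a helpful unpacking of a step the paper leaves implicit, but the route is the same.
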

\begin{proof}
Recall that for a cover relation $T\lessdot T+e$ for which $e$ completes a cycle in $T$, $\FCS(T\lessdot T+e)$ is an isomorphism. Therefore the result follows from Theorem \ref{morsethinposet} and Lemma \ref{bcmatching}.
\end{proof}
Taking the (graded) Euler characteristic of both sides (as per Example \ref{gradedeuler}) of the above isomorphism yields Theorem \ref{whitneytheorem}, and therefore Theorem \ref{categcswhitney} can be viewed as a categorification of Whitney's Broken Circuit Theorem for the chromatic symmetric function.

\begin{corollary}  For any connected graph $G$ with $n$ vertices, $\HCS(G)$ is supported in bigradings $(i,j)$ which satisfy $0\leq i\leq n-1$ and $0\leq j\leq i$.
\label{cssupport}
\end{corollary}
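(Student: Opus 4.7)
The plan is to apply Theorem \ref{categcswhitney} to replace $\HCS(G)$ by $H^*(\FCS|_\NBC)$, and then read the bigrading bounds directly off the restricted cochain complex $C^*(\FCS|_\NBC, c|_\NBC)$. Since the cohomology of a complex is supported in the same bigradings as the complex itself, it suffices to bound the bigradings of the summands $\mc{M}_T$ with $T\in\NBC$.

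First, I will observe that every $T\in\NBC$ is a forest. Indeed, if $T$ contained a cycle $C$, let $e$ be the edge of $C$ with the largest label; then $C\setminus\{e\}\subseteq T$ is a broken circuit contained in $T$, contradicting $T\in\NBC$. Since any forest on $n$ vertices has at most $n-1$ edges, the homological grading $i=|T|$ satisfies $0\leq i\leq n-1$, which gives the first inequality.

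For the quantum grading bound, I will track the grading of the summands using the explicit decomposition $\mc{L}_m=\bigoplus_{k=0}^{m-1}S^{(m-k,1^k)}\{k\}$, so $\mc{L}_m$ is supported in quantum gradings $0,1,\dots,m-1$. Induction from a Young subgroup preserves the grading, so for any $T\subseteq E$ whose connected components have vertex-sizes $b_1,\dots,b_r$ with $r=k(T)$ and $\sum_s b_s = n$, the module
\[
\mc{M}_T=\Ind_{S_{B_1}\times\cdots\times S_{B_r}}^{S_{V(G)}}(\mc{L}_{b_1}\otimes\cdots\otimes\mc{L}_{b_r})
\]
is supported in quantum gradings $0\leq j\leq \sum_{s=1}^{r}(b_s-1)=n-k(T)$. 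When $T\in\NBC$ is a forest with $|T|=i$ edges, we have $k(T)=n-i$, hence $n-k(T)=i$, and therefore $\mc{M}_T$ is supported in quantum gradings $0\leq j\leq i$. Combining these two observations yields exactly the claimed support region $0\leq i\leq n-1$ and $0\leq j\leq i$.

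There is no real obstacle here: Theorem \ref{categcswhitney} already encapsulates the nontrivial work of cancelling the broken-circuit part of the complex, and the remaining estimate reduces to the elementary fact that an NBC subgraph is a forest, together with straightforward bookkeeping of the gradings in $\mc{L}_m$ under induction. The only mild subtlety worth flagging is that one must know $\NBC$ is a \emph{lower} order ideal in $2^E$ so that the restriction makes sense as a subcomplex, but this is immediate from the definition: any subset of a broken-circuit-free set is itself broken-circuit-free.
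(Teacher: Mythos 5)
Your proof is correct and takes essentially the same approach as the paper: apply Theorem~\ref{categcswhitney} to pass to the $\NBC$-restricted complex, observe that $\NBC$ subgraphs are forests (hence $i=|T|\leq n-1$), and read off the quantum-grading bound $j\leq n-k(T)=i$ from the explicit grading of $\mc{L}_{b_s}$ and the fact that induction preserves degree. The paper states these two steps very tersely, and you have simply filled in the same bookkeeping.
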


\begin{proof} For the bounds on $i$, the argument is the same as in the proof of Corollary \ref{support}. The bounds on $j$ follow immediately from the definition of $\FCS(T)$. 
\end{proof}
 
 \section{A Broken Circuit Model for Graph Configuration Spaces}\label{spectralsequencesec}
 
Eastwood and Huggett's approach to categorifying the chromatic polynomial \cite{eastwood2007euler} relies on ideas from topology such as configuration spaces and Leray sequence. Let $M$ be a manifold, and $G$ a graph with vertex set $\{v_1,\dots,v_n\}$. Let $M^G$ denote the corresponding graph configuration space, that is $M^{\times n}\setminus Z$ where $Z$ is the union of all closed submanifolds of the form $\{\vec{x}\in M^{\times n} \ | \ x_i=x_j\}$ where $\{v_i,v_j\}$ is an edge in $G$. In \cite{baranovsky2012graph}, Sazdanovi\'{c} and Baranovsky give a spectral sequence with $E_1$ page isomorphic to the chromatic chain complex $C^*(2^E,F^\Ch_{A,G},c)$ of Helme-Guizon and Rong, with $A=H^*(M;R)$, which converges to the relative cohomology $H^*(M^{\times n},Z;R)$. By Lefshetz duality and Poincare duality, the relative cohomology $H^*(M^{\times n},Z;R)$ is isomorphic to the cohomology of the graph configuration space $H^*(M^G;R)$. The spectral sequence used here is a special case of the following spectral sequence of Bendersky and Gitler \cite{bendersky1991cohomology}. Given a simplicial topological space $X$ and a collection $Z_\alpha$ of closed subspaces for $\alpha\in E$, let $Z$ be the union of all of the $Z_\alpha$, and for $s\subseteq E$, let $Z_s$ be the intersection of all $Z_\alpha$ for $\alpha\in E$. Then the relative cohomology $H^*(X,Z;R)$ can be computed as the total complex of the bicomplex $\mc{C}^{*,*}=\oplus_{s\subseteq E}C^*(Z_s;R)$ with \begin{align}C^{i,j}&=\bigoplus_{|s|=i}C^j(Z_s;R)
 \label{barsazspec1}\\
 d_h^{i,j}:C^{i,j}\to C^{i+1,j},\ \ &\ \ \ \phi\in C^j(Z_s;R)\mapsto \sum_{s\lessdot t}c(s\lessdot t)i^\#\phi \label{barsazspec2}
 \end{align}
 where $i:Z_{t}\to Z_s$ is the inclusion inducing the chain map $i^\#:C^*(Z_{s};R) \to C^*(Z_{t};R)$, and
  \[d_v^{i,j}:C^{i,j}\to C^{i,j+1},\ \ \ \ \ \phi\in C^j(Z_s;R)\mapsto \delta\phi \]
  where $\delta$ is the simplicial codifferential in the complex $C^*(Z_s;R)$.
  There are two spectral sequences associated to a bicomplex, $\mc{C}^{*,*}$, one associated to choosing the zero page $E_0$ to be $(\mc{C}^{*,*},d_h)$ the other, choosing the zero page to be $(\mc{C}^{*,*},d_v)$. By choosing $(\mc{C}^{*,*},d_v)$ as the zero page, Sazdanovi\'{c} and Baranovski obtain a spectral sequence with $E_1$ page $(E_1,d_1)$ where $E_1^{p,q}=\oplus_{|s|=p}H^q(Z_s;R)$ and $d_1^{p,q}=(d_h^{p,q})^*$, the induced horizontal differential. This spectral sequence converges to the total complex, which is isomorphic to the cohomology of the graph configuration space $H^*(M^G;R)\cong H^*(M^{\times n},Z;R)$. We make the following simplification of their spectral sequence. Given a graded ring $A$ we will let $A(q)$ denote the set of homogeneous elements of $A$ of degree $q$. 
  
  \begin{theorem}\label{brokenspectral}
  There is a spectral sequence converging to $H^*(M^G;R)$ with $E_1$ page given by the broken circuit model for the chromatic complex:
  \[E_1^{p,q}=\bigoplus_{s\in\NBC, |s|=p} [F_{G,A}^\Ch(s)](q)=\bigoplus_{s\in\NBC, |s|=p} [A^{\otimes k(s)}](q),\]
  where $A=H^*(M;R)$ and 
  \[d_1:E_1^{*,*}\to E_1^{*+1,*},\ \ \ \ \ d_1=\sum_{s,t\in\NBC, s\lessdot t} c(s\lessdot t)F^\Ch(s\lessdot t).\]
  \end{theorem}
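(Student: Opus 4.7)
The plan is to extend the Morse-theoretic reduction of Theorem \ref{categcwhitney} from the chromatic chain complex to the full Sazdanovi\'c--Baranovsky bicomplex $\mathcal{C}^{*,*}$ of equations (\ref{barsazspec1})--(\ref{barsazspec2}), and then read off the column-filtration spectral sequence of the reduced bicomplex. Recall that $\Tot(\mathcal{C})$ computes $H^*(M^{\times n},Z;R) \cong H^*(M^G;R)$, that the column-filtration spectral sequence of $\mathcal{C}$ has $E_1^{p,q}=\bigoplus_{|s|=p} H^q(Z_s;R)$ with $d_1$ induced by $d_h$, and that $H^q(Z_s;R) \cong [A^{\otimes k(s)}](q)$ by the K\"unneth formula since $Z_s \cong M^{\times k(s)}$.

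The key geometric observation is that whenever a cover relation $s\lessdot s+e$ in $2^E$ has $e$ completing a cycle in $s$, the diagonal condition $x_u=x_v$ imposed by $e=\{u,v\}$ is already forced by the path from $u$ to $v$ inside $s$, so $Z_{s+e}=Z_s$ as subspaces of $M^{\times n}$, and the corresponding component $i^\#$ of $d_h$ is the identity on the chain level. The matched pairs of $\MBC$ from Lemma \ref{bcmatching} are precisely of this form.

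Now introduce the sub-bicomplex $\mathcal{B}^{p,q}=\bigoplus_{s\in\BCC,\,|s|=p} C^q(Z_s;R)$; this is closed under $d_h$ because $\BCC$ is upward-closed in $2^E$, and trivially under $d_v$. I lift $\MBC$ to a matching $\tilde M$ on the based complex $\Tot(\mathcal{B})$ by pairing $(s,q)\leftrightarrow(s+e,q)$ for every $q$ whenever $(s,s+e)\in\MBC$. This matching is complete on $\BCC$ (since $\MBC^0=\NBC$ by Lemma \ref{bcmatching}), Morse (by the $\pm\id$ observation above), and acyclic: in the reoriented digraph the $d_v$-arrows strictly increase $q$ while all horizontal arrows, original or reversed, preserve $q$, so any directed cycle must collapse into a single row and there contradict the acyclicity of $\MBC$. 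Theorem \ref{morsetheorem}(2) then yields $H^*(\Tot(\mathcal{B}))=0$.

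Finally, the short exact sequence $0\to\Tot(\mathcal{B})\to\Tot(\mathcal{C})\to\Tot(\mathcal{C}|_\NBC)\to 0$, combined with the vanishing of $H^*(\Tot(\mathcal{B}))$, gives $H^*(\Tot(\mathcal{C}|_\NBC))\cong H^*(\Tot(\mathcal{C}))\cong H^*(M^G;R)$, where $\mathcal{C}|_\NBC^{p,q}=\bigoplus_{s\in\NBC,\,|s|=p} C^q(Z_s;R)$ is the quotient bicomplex (its two induced differentials square to zero since $\mathcal{B}$ is stable under each of $d_h$ and $d_v$ separately). The column-filtration spectral sequence of $\mathcal{C}|_\NBC$ then has the $E_1$ page described in the statement, after using K\"unneth to identify $H^q(Z_s;R)$ with $[A^{\otimes k(s)}](q)$ and citing Sazdanovi\'c--Baranovsky to identify the induced horizontal differential with $\sum_{s,t\in\NBC,\,s\lessdot t} c(s\lessdot t)\FCH(s\lessdot t)$. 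It converges to the total cohomology, which is $H^*(M^G;R)$ by the previous step. The main technical step is verifying the acyclicity of $\tilde M$; this is the only place where the interaction between the horizontal Morse matching and the vertical codifferential enters, and it reduces immediately to Lemma \ref{bcmatching} via the monotonicity of $d_v$ in the vertical grading.
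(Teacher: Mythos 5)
Your proof is correct, and it reaches the same conclusion by the same overall skeleton as the paper: form the short exact sequence of total complexes $0\to\Tot(\mc{C}^{*,*}_\BCC)\to\Tot(\mc{C}^{*,*})\to\Tot(\mc{C}^{*,*}_\NBC)\to 0$, show the first term is acyclic, and conclude via the long exact sequence. The difference is purely in how acyclicity of $\Tot(\mc{C}^{*,*}_\BCC)$ is established. The paper argues row-by-row: it identifies each row (fixed $q$) with a complex $C^*(\BCC,F^\BCC,c)$ to which Lemma \ref{bcmatching} applies, deduces exactness of all rows, and then invokes the Acyclic Assembly Lemma to pass from exact rows to an acyclic total complex. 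You instead lift $\MBC$ to a single Morse matching $\tilde M$ on the total complex itself, pairing $(s,q)\leftrightarrow(s+e,q)$ in every vertical degree, and verify acyclicity directly by the observation that vertical arrows strictly increase $q$ while all horizontal arrows (original or reversed) preserve $q$, forcing any would-be cycle into a single row where it contradicts Lemma \ref{bcmatching}. This buys you a self-contained argument that avoids citing the Acyclic Assembly Lemma, at the (modest) cost of the explicit monotonicity-of-$q$ verification that the lifted matching is still acyclic. Both routes are correct; the paper's factors through a standard homological-algebra black box, while yours stays entirely inside algebraic Morse theory.
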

  \begin{proof}
  Let $G$ be a graph with a fixed ordering $\mc{O}$ of its edge set, and let $\NBC=\NBC_{G,\mc{O}}$ and $\BCC=\BCC_{G,\mc{O}}$ as in Definition \ref{brokencircdef}. Consider the double complex $(\mc{C}^{*,*},d_h,d_v)$ from equations (\ref{barsazspec1}), (\ref{barsazspec2}). Form the double complex $(\mc{C}^{*,*}_\NBC,d_h^\NBC,d_v^\NBC)$ where 
  \[\mc{C}^{p,q}_\NBC=\bigoplus_{s\in\NBC, |s|=p}C^q(Z_s;R), \ \ \ \  \ \ \ \ \ \ \ \ \  d_h^\NBC=d_h|_{\mc{C}^{*,*}_\NBC},\ \ \ \ \ \ \ \ \ \  d_v^\NBC=d_v|_{\mc{C}^{*,*}_\NBC}.\]
  Consider also the double complex $(\mc{C}^{*,*}_\BCC,d_h^{\BCC},d_v^{\BCC})$, where
  \[\mc{C}^{p,q}_\BCC=\bigoplus_{s\in\BCC_G, |s|=p}C^q(Z_s;R),  \ \ \ \  \ \ \ \ \ \ \ \ \ d_h^{\BCC}=d_h|_{\mc{C}^{*,*}_\BCC},\ \ \ \ \ \ \ \ \ \  d_v^{\BCC}=d_v|_{\mc{C}^{*,*}_\BCC}.\]
  Since $\mc{C}^{*,*}_\BCC$ is a subcomplex of $\mc{C}^{*,*}$, we have a short exact sequence of double complexes
  \[0\to \mc{C}^{*,*}_\BCC\to \mc{C}^{*,*}\to \mc{C}^{*,*}_\NBC\to 0\]
  inducing a short exact sequence on total complexes
  \begin{equation}\label{totses}
      0\to \Tot\mc{C}^{*,*}_\BCC\to \Tot\mc{C}^{*,*}\to \Tot\mc{C}^{*,*}_\NBC\to 0.
  \end{equation}
  Consider the chain complex $(\mc{C}^{*,*}_\BCC,d_h^\BCC)$. Notice, by definition, we have \[(\mc{C}^{*,*}_\BCC,d_h^\BCC)=C^*(\BCC,F^\BCC,c)\] where $F^\BCC(s)=C^*(Z_s;R)$ and \[F^\BCC(s\lessdot t)=i^\#:C^*(Z_s;R)\to C^*(Z_t;R)\] where $i:Z_t\to Z_s$ is the inclusion map. In the case that $t=s\cup\{e\}$ where $e\notin s$ and $e$ completes a cycle in $s$, then $Z_s=Z_t$ and the inclusion $i:Z_t\to Z_s$ induces an isomorphism $C^*(Z_s;r)\cong C^*(Z_t;R)$. Therefore, by Lemma \ref{bcmatching}, $\BCC$ has a complete Morse matching with respect to the functor $F^\BCC$, and therefore $H^*(\mc{C}^{*,*}_\BCC,d_h^\BCC)=H^*(\BCC,F^\BCC,c)=0.$
  Since $\mc{C}^{*,*}_\BCC$ has exact rows, the Acyclic Assembly Lemma \cite[Lemma 2.7.3]{weibel1995introduction} tells us that $\Tot \mc{C}^{*,*}_\BCC$ is acyclic, and therefore the long exact sequence on homology resulting from the short exact sequence (\ref{totses}) tells us $H(\Tot \mc{C}^{*,*}_\NBC)\cong H(\Tot \mc{C}^{*,*})\cong H^*(M^G;R)$.
  \end{proof}
  
  As an application of Theorem \ref{brokenspectral}, we get the following bounds on homological gradings for the cohomology rings of graph configuration spaces.
  
  \begin{corollary} 
    Let $G$ be a graph with $n$ vertices, and $R$ a commutative ring. Then $H^*(M^G,R)$ is supported in homological gradings $0\leq i\leq n-1$.
\end{corollary}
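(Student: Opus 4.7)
The plan is to derive this directly from Theorem \ref{brokenspectral}, which exhibits a spectral sequence converging to $H^*(M^G;R)$ with $E_1$ page
\[E_1^{p,q}=\bigoplus_{s\in\NBC,\ |s|=p}[A^{\otimes k(s)}](q).\]
So it suffices to show that the $E_1$ page vanishes for $p\geq n$, and then transport this vanishing to $E_\infty$ and to the abutment.

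First I would record the key combinatorial input, which is exactly the one already invoked in Corollary \ref{support}: every element $s\in\NBC$ is a forest. Indeed, if $s$ contained any cycle $C$, then letting $e$ be the maximum-labeled edge in $C$ shows that $C\setminus\{e\}\subseteq s$ is a broken circuit, contradicting $s\in\NBC$. Since a forest on $n$ vertices has at most $n-1$ edges, there are no NBC subgraphs of size $\geq n$, and therefore $E_1^{p,q}=0$ whenever $p\geq n$.

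Next I would push this vanishing through the spectral sequence. For each $r\geq 2$, the page $E_r^{p,q}$ is a subquotient of $E_{r-1}^{p,q}$, hence in particular $E_\infty^{p,q}$ is a subquotient of $E_1^{p,q}$ and vanishes for $p\geq n$. Under the convergence of Theorem \ref{brokenspectral}, the associated graded of the induced filtration on $H^*(M^G;R)$ at filtration level $p$ is isomorphic to $\bigoplus_q E_\infty^{p,q}$, so the filtration stabilizes by $p=n-1$, giving the claimed homological support $0\leq i\leq n-1$. The vanishing for $p<0$ is immediate from the definition of $\NBC\subseteq 2^E$.

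There is no substantive obstacle: the argument is formally parallel to the proof of Corollary \ref{support}, with Theorem \ref{brokenspectral} playing the role that Theorem \ref{categcwhitney} plays there. The only conceptual step worth flagging is the passage from a statement about the $E_1$ page of a spectral sequence (which is purely combinatorial, governed by the broken circuit model) to a statement about the topology of $M^G$; this passage is automatic once one has convergence, because subquotients of zero are zero.
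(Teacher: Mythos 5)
Your proof is correct and follows essentially the same route as the paper's: bound the size of $\NBC$ subgraphs by $n-1$ via the forest observation from Corollary \ref{support}, conclude that the $E_1$ page of the spectral sequence in Theorem \ref{brokenspectral} vanishes for $p\geq n$, and propagate this vanishing through to $E_\infty$ and the filtration on the abutment. You in fact give more detail than the paper (which only gestures at the spectral-sequence step) and correctly recognize that Theorem \ref{categcwhitney}, cited by the paper, is not actually needed here.
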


\begin{proof}
This follows directly from Theorem \ref{categcwhitney} and Theorem \ref{brokenspectral} since among subsets $S\subseteq 2^E$, the largest possible cardinality of a subset in $\NBC$ is $n-1$ as shown in the proof of Corollary \ref{support}.
\end{proof}

\bibliography{bib} 

\end{document}